 \definecolor{labelkey}{gray}{.45}
\newcommand{\eps}{\varepsilon}
\newcommand{\R}{\mathbb R}
\newcommand{\C}{\mathbb{C}}
\newcommand{\NN}{\mathbb{N}}
\newcommand{\beq}{\begin{equation}}
\newcommand{\eeq}{\end{equation}}
\newcommand{\ds}{\displaystyle}
\newcommand{\ts}{\textstyle}
\newcommand{\Meas}{\mathfrak{M}}
\newcommand{\grad}{\nabla}
\newcommand{\restr}[1]{\lfloor_{#1}}
\newcommand{\wto}{\rightharpoonup}
\def\geq{\geqslant}
\def\leq{\leqslant}
\def\div{{\rm div}\,}
\def\curl{{\rm curl}\,}
\def\supp{{\rm supp}\,}
\newcommand{\nnn}{\nonumber}
\newcommand{\HH}{\breve{H}_{\div}^1}
\newcommand{\HHr}{\HH(\R^3;\R^3)}
\newcommand{\II}{\hat I}
\newcommand{\Aex}{\widehat A}
\newcommand{\Aexp}{\Aex_\perp}
\begin{document}

\markboth{S. Alama \& L. Bronsard \& B. Galv\~ao-Sousa} {Mean field model for thin films}

\title{Singular Limits for Thin Film Superconductors in Strong Magnetic Fields}

\author{STAN ALAMA \& LIA BRONSARD}

\address{Department of Mathematics and Statistics\\ McMaster University\\
Hamilton, ON, Canada
\\
\emph{\texttt{\{alama,bronsard\}@mcmaster.ca}}}

\author{BERNARDO GALV\~AO-SOUSA}
\address{Department of Mathematics\\ University of Toronto\\ Toronto, ON, Canada\\
\emph{\texttt{beni@math.toronto.edu}}}

\maketitle

\begin{abstract}
{\bf Abstract.} 
We consider singular limits of the three-dimensional Ginzburg-Landau functional for a superconductor with thin-film geometry, in a constant external magnetic field. The superconducting domain has characteristic thickness on the scale $\eps>0$, and we consider the simultaneous limit as the thickness $\eps\to 0$ and the Ginzburg-Landau parameter $\kappa\to\infty$.  We assume that the applied field is strong (on the order of $\eps^{-1}$ in magnitude) in its components tangential to the film domain, and of order $\log\kappa$ in its dependence on $\kappa$.  We prove that the Ginzburg-Landau energy $\Gamma$-converges to an energy associated with a two-obstacle problem, posed on the planar domain which supports the thin film.  The same limit is obtained regardless of the relationship between $\eps$ and $\kappa$ in the limit.  Two illustrative examples are presented, each of which demonstrating how the curvature of the film can induce the presence of both (positively oriented) vortices and (negatively oriented) antivortices coexisting in a global minimizer of the energy.

\end{abstract}

\keywords{Partial Differential Equations; Calculus of Variations; Ginzburg-Landau; superconductivity.}

\ccode{Mathematics Subject Classification 2000: 35J50, 35Q56, 49J45.}

\section{Introduction}

In this paper we continue the study of thin-film superconductors begun in our previous paper \cite{AlamaBronsardGS}.  The superconducting sample occupies a domain  $\Omega_\eps\subset\R^3$, 
$$  \Omega_\eps =\{ (x',x_3)\in\R^3: \ 
 x'\in\omega, \ \eps f(x')<x_3<\eps g(x')\},  $$
where $\omega\subset\R^2$ is a bounded regular domain in the plane,
$f,g: \ \omega\to\R$ are smooth functions on $\omega$ with $f(x')<g(x')$ for all $x'\in\omega$, and $\eps>0$.
 We denote by 
$$   a(x') = g(x') - f(x'),  $$
the thickness of the film for given $x'\in\omega$.  

We study minimizers and Gamma-limits of the full three-dimensional Ginzburg--Landau model, for the superconductor $\Omega_\eps$ subjected to a spatially constant external magnetic field, $\mathbf{h}^{ex}\in\R^3$. 
The state of the superconductor is determined via a complex order parameter 
$\mathbf{u}: \ \Omega_\eps\to\mathbb{C}$ and the magnetic vector potential $\mathbf{A}: \ \R^3\to\R^3$, which determines the magnetic field  $\mathbf{h=\nabla\times A}$.  The energy of the configuration 
$(\mathbf{u},\mathbf{A})$ is given by:
\begin{equation*}
{\bf I}_{\eps,\kappa}({\bf u},{\bf A}) := \frac12 \int_{\Omega_{\eps}} \left( |\grad_A {\bf u} |^2 +  \frac{\kappa^2}{2} \bigl( 1 - |{\bf u}|^2 \bigr)^2 \right) \, d{\bf x} + \frac12 \int_{\R^3} |{\bf h} - {\bf h}^{\rm ex}|^2  \, d{\bf x},
\end{equation*}
with Ginzburg--Landau parameter $\kappa>0$ and thickness parameter $\eps>0$, as above.

With the appearance of three parameters in this problem, $\eps,\kappa$, and $\mathbf{h}^{ex}$, it is important to identify limiting regimes which are mathematically interesting and physically relevant.  As in \cite{AlamaBronsardGS} we rescale the domain by $\eps$ in the $x_3$ direction in order to recognize the correct scaling for $\mathbf{h}^{ex}$ in terms of the thickness parameter.
We introduce rescaled quantities as follows:
\begin{align*}
& x = (x',x_3)= (x_1,x_2,x_3) = \bigg({\bf x}_1, {\bf x}_2, \frac{{\bf x}_3}{\eps}\bigg)\in \Omega_1 \\
& A(x) = ({\bf A}_1, {\bf A}_2, \eps {\bf A}_3)({\bf x}), \\
& u(x) = {\bf u}({\bf x}).
\end{align*}
As a result, the order parameter $u$ is defined in a fixed ($\eps$-independent) domain 
$$ \Omega:= \Omega_1=\{ (x',x): \ f(x')<x_3<g(x'), \ x'\in\omega\}.  $$
We denote by $h=\nabla\times A$, and remark that the physical magnetic field
$\mathbf{h(x)}=\left(\eps^{-1}h'(x), h_3(x)\right)$ in the new coordinates.
The energy transforms as follows:
$$\mathbf{I}_{\eps,\kappa}(\mathbf{u},\mathbf{A})
= \eps \widetilde I_{\eps,\kappa}(u,A),  
$$
where 
\begin{align*}
  \widetilde I_{\eps,\kappa}(u,A)  
&=:  \int_{\Omega} \left( \frac12 | (\grad'-iA)u|^2 + \frac{1}{2\eps^2} \left| (\partial_3 -iA_3)u \right|^2 + \frac{ \kappa^2}{4} \bigl( 1 - |u|^2 \bigr)^2 \right) \, dx  \\
&\qquad\qquad	+ \frac12 \int_{\R^3} \left( \left|h_3 - h^{ex}_3\right|^2 + \frac{1}{\eps^2}\left|h' -  {h'}^{ex}\right|^2 \right)  \, dx,
\end{align*}
and where $\grad' = (\partial_1, \partial_2)$, and the rescaled effective external field takes the form
$$   {h}^{ex}= (h_1^{ex}, h_2^{ex}, h_3^{ex}) = 
           \left( \eps \mathbf{h}_1^{ex}, \eps \mathbf{h}_2^{ex},
         \mathbf{h}_3^{ex}\right).
$$

In our previous work \cite{AlamaBronsardGS}, we considered the case where the Ginzburg--Landau parameter $\kappa$ was fixed, in the limit $\eps\to 0$.
By the above transformation, we noted that taking the {\em parallel} component of the applied field $\mathbf{h}'{}^{ex}= O(\eps^{-1})$ gave a critical scaling for the applied field strength.  We determined the $\Gamma$-limit of the energy in the case of critical, subcritical, and supercritical fields.  The most interesting case was (unsurprisingly) the critical case.  With critical parallel magnetic field, we observed an interaction between the parallel field component and the geometry of the domain, and the $\Gamma$-limiting functional was a two-dimensional Ginzburg--Landau-type energy,
\begin{equation}\label{tfilm}
G_\kappa(v;F)= \int_\omega   a(x')\left\{
  \frac12 |(\nabla - i B')v|^2 + {\kappa^2\over 4} (|v|^2-1)^2 
      + {(a(x'))^2\over 24}|{h^{\text{ex}}}'|^2\,|v|^2
\right\} dx', 
\end{equation}
with $B':\R^2\to\R^2$ satisfying $\nabla'\times B'=F$, an effective magnetic field $\vec F= F\,\vec e_3$ acting perpendicularly to the limiting plane of the film,
$$
  F = h_3^{ex}- (h_1^{ex}, h_2^{ex})\cdot\nabla'\left( {f(x')+g(x')\over 2}\right)  .
$$
In this $\Gamma$-limit, the magnetic field is predetermined by the strength and direction of the original applied field $h^{ex}\in\R^3$ and by the geometry of the domain $\Omega_\eps$, and is part of the variational problem for the thin film limit.  In this way, the functional obtained is of the same type as that studied by Chapman, Du, \& Gunzburger \cite{CDG} for thin film superconductors with constant $\kappa$ and bounded $\mathbf{h}^{\text{ex}}$.  One of the attractions of the critical scaling case is that the effective magnetic field is {\it non-constant} when the film's vertical center $\frac12(f(x')+g(x'))$ is curved, leading to some more interesting configurations of vortices (and antivortices) appearing in minimizers of $G_\kappa$ near the lower critical field.

In this paper, we consider the simultaneous limit of the energy as both $\eps\to 0$ and $\kappa\to\infty$.  We choose an exterior applied field which is critical with respect to the thickness parameter $\eps$, and on the scale of the first critical field in $\kappa$,
$$  \mathbf{h}^{ex} = \left( H'{\log\kappa\over \eps}, H_3\log\kappa\right),  $$
in other words, in the rescaled functional we take
\begin{equation}\label{Hlog}  h^{ex}= H\, \log\kappa,  
\end{equation}
where $H=(H', H_3)=(H_1,H_2,H_3)\in\R^3$ is a fixed constant vector (independent of $\eps,\kappa$.)
The choice of an applied field of order $\log\kappa$ is natural for the Ginzburg--Landau model, both in two dimensions (see \cite{SS} and the references contained therein) and in three dimensions (see \cite{ABMont1}, \cite{BJOS}) as it is the critical scale of the magnetic field strength at which vortices become energetically favorable in the sample.  Our results confirm this scaling in the thin film setting as well.
For applied fields of the form \eqref{Hlog}, it is expected that the energy of minimizers of $\tilde I_{\eps,\kappa}$ will be on the order of $[\log\kappa]^2$.  We are thus led to introduce the following normalization, and study the family of functionals
$$   I_{\eps,\kappa}(u,A):= {1\over(\log\kappa)^2}\widetilde I_{\eps,\kappa}(u,A)  $$
and configurations $(u,A)$ with bounded values of $I_{\eps,\kappa}$.

To present our results, we first introduce appropriate function spaces for the configurations $(u,A)$.  For $u$ this is very simple, $u\in H^1(\Omega_1,\mathbb{C})$.  For $A$, we note that far from $\Omega$, we expect the field $h=\curl A$ to relax to the rescaled field 
$h^{\text{\it ex}}$.  We first choose a fixed $\Aex$ with $\nabla\times \Aex=H=(H_1,H_2,H_3)$.  A convenient choice is:
\begin{equation}\label{eq:gauge_choice}
\Aex := \left( H_2x_3 - \frac12 H_3 x_2\, , \, \frac12 H_3 x_1-H_1x_3\,  ,  \, 0\right).
\end{equation}
Note that this choice fixes a gauge for $\Aex$.
Then, we take our $A$ in the following affine space,
\begin{equation}\label{Adef}
 A\in \mathcal{A}:=\left\{ A\in H^1_{loc}(\R^3;\R^3): \ A-\Aex\log\kappa \in \HHr\right\},
\end{equation}
where $\HHr$ is the closure of the space of smooth, compactly supported divergence-free vector fields $F\in C_0^\infty(\R^3;\R^3)$ in the Dirichlet norm,  (see \cite{GP},)
$$\|F\|_{\HHr}=\left[\int_{\R^3} |DF|^2\, dx\right]^2.  $$

With the onset of vorticity, the limiting behavior of the functional $I_{\eps,\kappa}$ must be described in terms of the limiting currents and vorticity measure (Jacobian) rather than the order parameter $u$, as the number of vortices will become unbounded in the limit.  The current (or momentum density) is:
$$   j=j(u) = (i u, \nabla u),  \quad \text{where $(a,b)=\frac12[\bar a b + a\bar b]$,}
$$
and the vorticity, or weak Jacobian, $J=\frac12\nabla\times j$.  For $u\in H^1(\Omega;\mathbb{C})$, $j\in L^2(\Omega;\R^3)$, and so $J$ is defined in the sense of distributions, although in our context it will in fact be measure-valued (see Jerrard \& Soner \cite{JS}.)  It will often be convenient to represent $j$ and $J$ as differential forms, 
\begin{gather*}
  j=j_1\, dx^1 + j_2\, dx^2 + j_3\, dx^3\in \Lambda^1(\R^3), \\
   J= d\, j = J_1 \, dx^2\wedge dx^3
        + J_2 \, dx^3\wedge dx^1 + J_3 \, dx^1\wedge dx^2\in\Lambda^2(\R^3),
\end{gather*}
as the natural mapping between forms and vector fields is an isometry in Euclidean space $\R^3$.
  The $\Gamma$-limit will impose further structure on the limiting momenta and Jacobians, so we define the following domain:
\begin{equation}\label{eq:V}
\mathcal{Z}:= \left\{ j\in L^2(\Omega;\R^3): \ 
   j=(j'(x'),0), \ J:= \frac12\nabla\times j \in \Meas(\Omega;\R^3)\right\},
\end{equation}
where $\Meas(\Omega;\R^3)$ is the space of vector-valued Radon measures on $\Omega$.
Note that for $j\in\mathcal{Z}$, the corresponding Jacobian takes the form $J=(0,0,J_3(x'))$.  We define the functional
\begin{equation}\label{Iinfty}
  I_\infty(j; F) = \begin{cases}
	\ds  \frac12\|a(x')\nabla\times j\|_{\Meas(\omega)} + 
	\frac{1}{2} \int_\omega a(x') \left|j' - B'\right|^2, 	& \text{ if } j \in \mathcal{Z}, \\
	\infty,		& \text{ otherwise,}
	\end{cases}
\end{equation}
where $F=\nabla'\times B'$, and $B':\R^2\to\R^2$.
This is our {\em mean field} model, or {\em vortex density} functional (see \cite{CRS}.)

The main result is that $I_{\eps,\kappa}$ Gamma-convergences to $I_\infty$.  We prove this in the usual two steps:  first, bounded sequences are compact and the limit is lower semicontinuous in the energies:

\begin{theorem}\label{thm:big1}
For \underbar{any} pair of sequences $\eps_n\to 0$ and $\kappa_n\to\infty$, assume $\{(u_{n}, A_n)\}_{n\in\NN} \subset H^{1}(\Omega;\C) \times \mathcal{A}$ satisfy the uniform bound,
 $$
\sup_{n\in\NN}\, I_{\eps_n,\kappa_n}(u_n,A_n) < +\infty,
$$
and define $j_n=j(u_n)=(u_n,d\, u_n)$ and $J_n=\frac12 d\, j_n$.
Then there exists a subsequence (which we continue to denote $\{\eps_n,\kappa_n\}$) 
and $j_*\in\mathcal{Z}$, with $J_*=\frac12\nabla\times j_*$, such that:
\begin{enumerate}
\item along the subsequence,
\begin{gather}
\label{t1}
|u_n|^2\to 1, \qquad\text{in $L^4(\Omega)$,}, \\
\label{t2}
{A_n \over \log\kappa}- \Aex\wto 0, \qquad\text{in $\HHr$,} \\
\label{t3}
{ j_n\over |u_n|\log\kappa} \wto j_*, \qquad\text{in $L^{2}(\Omega;\R^3)$,}\\
\label{t4}
{J_n\over\log\kappa}\wto J_*, \qquad\text{in the weak-$\star$ topology on  $[C^{0,\gamma}(\Omega)]^*$,}
\end{gather}
for all $0<\gamma<1$.
\item Furthermore,
$$  \liminf_{n\to\infty} I_{\eps_n,\kappa_n}(u_n,A_n)
\ge I_\infty (j_*; F_*) 
   + \frac{1}{2} \int_\omega
     { \frac{a^3(x')}{12}} \big| H'\big|^2  \, dx'
,  $$
where
$F_*$ is defined by
\begin{equation}\label{Fdef}
  F_*(x') = H_3 - (H_1, H_2)\cdot\nabla'\left( {f(x')+g(x')\over 2}\right)  ,
  \end{equation}
and $I_\infty(j; F)$ is defined as in \eqref{Iinfty}.
\end{enumerate}
\end{theorem}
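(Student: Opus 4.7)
The plan is to establish compactness of the rescaled sequences in the senses asserted by \eqref{t1}--\eqref{t4}, and then to derive the $\Gamma$-\emph{liminf} bound by expanding the gauge-covariant kinetic term in the thickness direction. The approach combines the thin-film $\Gamma$-convergence of Chapman--Du--Gunzburger \cite{CDG} and our previous paper \cite{AlamaBronsardGS} with the vortex-density analysis for large $\kappa$ in \cite{SS,ABMont1,BJOS}.

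For compactness, I would first truncate $u_n$ at modulus one, which does not raise the energy and yields $|u_n|\leq 1$. The potential bound then forces $\int_\Omega(1-|u_n|^2)^2 \to 0$, giving \eqref{t1}. Setting $\tilde A_n := A_n - \Aex\log\kappa_n \in \HHr$, which is divergence-free, the magnetic part of the energy controls $\|\curl \tilde A_n\|_{L^2(\R^3)} \le C\log\kappa_n$; since the Dirichlet norm of a divergence-free field on $\R^3$ equals the $L^2$ norm of its curl, $\tilde A_n/\log\kappa_n$ is bounded in $\HHr$ and converges weakly along a subsequence. The $\eps_n^{-2}$-weighted bound on the tangential magnetic field forces strong $L^2(\R^3)$ convergence of $(h_n)'/\log\kappa_n$ to $H'$, which together with the gauge choice \eqref{eq:gauge_choice} identifies the weak limit and yields \eqref{t2}. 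A Rellich-type compact embedding of $\HHr$ into $L^2_{\rm loc}$ upgrades this to strong $L^2(\Omega)$ convergence of $\tilde A_n/\log\kappa_n$ to $0$.

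For the currents I would use the decomposition
\[
j_n = |u_n|^2 A_n + (iu_n,(\nabla - iA_n)u_n),
\]
together with the kinetic bounds $\|(\nabla'-iA_n')u_n\|_{L^2}\le C\log\kappa_n$ and $\|(\partial_3 - iA_{n,3})u_n\|_{L^2}\le C\eps_n\log\kappa_n$, to obtain the uniform $L^2$ bound required for \eqref{t3}. Because $\Aex_3\equiv 0$ by \eqref{eq:gauge_choice} and $\tilde A_{n,3}/\log\kappa_n \to 0$ in $L^2(\Omega)$ by the previous step, the third component of the weak limit vanishes, so $j_*\in\mathcal{Z}$. The Jacobian convergence \eqref{t4} then follows from the Jerrard--Soner estimate \cite{JS} combined with the energy scaling $\|J_n\|_{[C^{0,\gamma}]^*}\le C\log\kappa_n$ in the spirit of \cite{ABMont1,BJOS}.

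The $\Gamma$-\emph{liminf} is the main content. Using \eqref{eq:gauge_choice}, the tangential gauge decomposes as
\[
\Aex'(x',x_3) = \Aex'\!\left(x',\tfrac{f(x')+g(x')}{2}\right) + \left(x_3 - \tfrac{f(x')+g(x')}{2}\right)(H_2,-H_1).
\]
The first summand induces the effective 2D potential $B'$ satisfying $\nabla'\times B' = F_*$ of \eqref{Fdef}, while the second has vanishing $x_3$-mean over $[f(x'),g(x')]$ with variance $a(x')^2/12$. Squaring, integrating in $x_3$, and invoking $|u_n|\to 1$ together with the lower semicontinuity of the squared $L^2$ norm against \eqref{t3}, the smooth part of the energy passes to $\frac12\int_\omega a|j_*'-B'|^2$, and the orthogonal Taylor remainder generates the extra term $\frac12\int_\omega \frac{a^3(x')}{12}|H'|^2$. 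The vortex contribution $\frac12\|a\nabla\times j_*\|_{\Meas(\omega)}$ is captured by a slicewise vortex lower bound (Jerrard ball construction or Sandier--Serfaty product estimate) applied at each height $x_3$, with the weight $a(x')$ emerging upon integration in $x_3$. I expect the principal obstacle to be coupling the logarithmic vortex energy, at its sharp constant, to the $x_3$-Taylor expansion of the oscillating gauge field, uniformly in the joint limit without any prescribed relationship between $\eps_n\to 0$ and $\kappa_n\to\infty$.
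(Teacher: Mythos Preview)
Your architecture is right, but there is a genuine gap in the lower bound. You propose to obtain $\tfrac12\int_\omega a|j_*'-B'|^2$ by ``lower semicontinuity of the squared $L^2$ norm'' and then, \emph{separately}, to capture $\tfrac12\|a\,\nabla\times j_*\|_{\Meas}$ by a slicewise vortex bound. This double-counts the kinetic energy: both contributions must come from the \emph{same} integral $\int_\Omega|\nabla' u_n|^2$, and once you invoke weak-$L^2$ lower semicontinuity you have consumed all of it. The paper's route is to first split off the $A'$-dependent terms in $|(\nabla'-iA_n')u_n|^2$ (these pass to the limit by the strong $L^p_{\rm loc}$ convergence of $A_n/\log\kappa_n$ to $\Aex$ and the weak convergence of $j_n'/\log\kappa_n$), and then to apply the three-dimensional Sandier--Serfaty product/defect-measure estimate from \cite{SS_product} to the remaining $\int|\nabla' u_n|^2$. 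That estimate delivers, in one stroke,
\[
\liminf_{n\to\infty}\int_\Omega\Bigl|\tfrac{\nabla' u_n}{\log\kappa_n}\Bigr|^2 \;\ge\; 2\,\|J_*\|_{\Meas(\Omega)} \;+\; \int_\Omega |j_*|^2,
\]
after which one recombines with the $A'$-terms to form $|j_*-\Aex'|^2$ and only then performs the explicit $x_3$-integration that produces both $B_*'$ (hence $F_*$) and the orthogonal remainder $\tfrac{a^3}{12}|H'|^2$. Your slicewise formulation could in principle be made to work, but not in tandem with a naive $L^2$ lower-semicontinuity step.

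Two smaller points. First, asserting $j_*\in\mathcal{Z}$ from $j_{*,3}=0$ is not enough: $\mathcal{Z}$ also requires $j_*'=j_*'(x')$. In the paper this $x_3$-independence comes \emph{from} the product formula: strong convergence $\partial_3 u_n/\log\kappa_n\to 0$ forces the $e_3$-defect measure to vanish, so the product inequality gives $J_{*,1}=J_{*,2}=0$, whence $\partial_3 j_{*,1}=\partial_3 j_{*,2}=0$. Your outline does not supply this. Second, truncating $u_n$ at modulus one is unnecessary and awkward here, since the theorem concerns $j(u_n)$ and $J(u_n)$ for the \emph{given} sequence, and truncation alters both. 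The paper instead normalizes $\tilde j_n:=j_n/(|u_n|\log\kappa_n)$, which enjoys the pointwise bound $|\tilde j_n|\le|\nabla u_n|/\log\kappa_n$ and yields the $L^2$ compactness directly, without modifying $u_n$.
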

It is important to note the (somewhat surprising) fact that the same limit is obtained regardless of how the two parameters $\eps_n\to 0$ and $\kappa_n\to\infty$.  In fact, this is particular to the case where $h^{ex}=O(\log\kappa)$.  For stronger applied fields $h^{ex}\gg\log\kappa$ the character of minimizers will depend on the relationships between $\kappa,\eps$, and $h^{ex}$.  In particular, in Remark~\ref{remark} we note that when $\eps$ is relatively large compared to $\kappa$ the superconductor will not behave as a thin film at all, and may exhibit a longitudinal vortex lattice, aligned along a horizontal direction. 

The proof of Theorem~\ref{thm:big1} is the content of section~2.
The second part of the Gamma convergence result is the construction of recovery sequences:

\begin{theorem}[]\label{Glimsup}
Let $j \in \mathcal{Z}$ and consider any sequences $\eps_n,\kappa_n$ such that $\eps_n \to 0$ and $\kappa_n\to\infty$.
Then there exists a sequence $\{(u_n, A_n)\} \subset H^1(\Omega;\C)\times\mathcal{A}$,  satisfying
\begin{align*}
& \frac{j_n}{\log \kappa_n} \to j\text{ in  $L^p(\Omega)$, for all $p<2$,} \\
&  \frac{J_n}{\log\kappa_n} \to  J:=\frac12 \nabla\times j \ \text{ weakly in \ $\Meas(\Omega;\R^3)$, and strongly in $(C_0^\gamma(\Omega))'$, $0<\gamma<1$},
\end{align*}
with $j_n:= (iu_n, du_n)$ and $J_n:=\frac12 d\,j_n$.
Moreover, 
$$
\liminf_{n \to \infty} I_{\eps_n,\kappa_n}(u_n,A_n) 
	\leq I_\infty(j_*;F_*).
$$
\end{theorem}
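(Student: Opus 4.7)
The plan is to follow the standard two-step construction for the $\Gamma$-limsup of vortex-density functionals, with a simplification that exploits the flexibility in choosing $A_n$.

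\textbf{Setup: eliminating the $\eps$-dependence.} Choose $A_n\equiv\Aex\log\kappa_n$ on all of $\R^3$. This lies in $\mathcal{A}$ trivially (the difference $A_n-\Aex\log\kappa_n=0\in\HHr$) and makes the magnetic contribution $\tfrac12\int_{\R^3}|h_n-h^{ex}|^2\,dx$ vanish identically. Take $u_n(x',x_3)=v_n(x')$ independent of $x_3$, so that $\tfrac{1}{2\eps_n^2}|(\partial_3-iA_{n,3})u_n|^2\equiv 0$ (since $\Aex_3=0$ in the gauge \eqref{eq:gauge_choice}). This already removes every $\eps$-dependent term from the energy, explaining the independence of the $\Gamma$-limit from the relative rate of $\eps_n$ and $\kappa_n$. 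Decomposing $\Aex'(x',x_3)=B'(x')+(H_2,-H_1)\,\xi$ with $\xi=x_3-\tfrac12(f+g)(x')$, and using $\int\xi\,dx_3=0$ together with $\int\xi^2\,dx_3=a^3/12$, Fubini reduces the full 3D energy to
\begin{equation*}
\widetilde I_{\eps_n,\kappa_n}(u_n,A_n)=\int_\omega a(x')\Bigl[\tfrac12|(\nabla'-i\log\kappa_n\,B')v_n|^2+\tfrac{\kappa_n^2}{4}(1-|v_n|^2)^2\Bigr]dx'+(\log\kappa_n)^2\!\int_\omega\tfrac{a^3(x')}{24}|H'|^2|v_n|^2\,dx',
\end{equation*}
i.e.\ a thickness-weighted 2D Ginzburg--Landau functional at scale $\log\kappa_n$, plus exactly the correction appearing in Theorem~\ref{thm:big1}.

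\textbf{Vortex lattice construction.} By mollification and a diagonal argument, it suffices to construct $v_n$ for smooth $j=(j'(x'),0)$ with $J_3=\tfrac12\curl' j'\in C_c^\infty(\omega)$, using continuity of $I_\infty$ along smoothings (where $\|\curl' j'^\delta\|_{L^1}\to\|\curl' j'\|_{\Meas}$ is the key density input). For such data, apply the classical 2D vortex lattice recovery at scale $\log\kappa$ (as in \cite{SS,ABMont1}): partition $\supp J_3$ into cells $Q_k$ of side $r_n\to 0$ with $r_n^2\log\kappa_n\to\infty$, place $N_k\approx\tfrac{\log\kappa_n}{\pi}\int_{Q_k}J_3$ unit vortices of sign $d_{k,\ell}=\pm 1$ at quasi-uniform points $\{a_{k,\ell}\}\subset Q_k$, and set
\begin{equation*}
v_n(x')=\rho_n(x')\exp\Bigl(i\log\kappa_n\,\chi_n(x')+i\sum_{k,\ell}d_{k,\ell}\arg(x'-a_{k,\ell})\Bigr),
\end{equation*}
with $\rho_n$ the usual radial GL core profile (equal to $1$ outside disks $B(a_{k,\ell},\kappa_n^{-1})$) and the smooth phase $\log\kappa_n\,\nabla\chi_n$ chosen via Hodge decomposition so that $j(v_n)/\log\kappa_n\to j$ weakly in $L^p$ for $p<2$. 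The standard ball-construction upper bound then gives that the 2D bracketed energy above equals $(\log\kappa_n)^2\bigl[\int_\omega a(x')|J_3|\,dx'+\tfrac12\int_\omega a(x')|j'-B'|^2\,dx'\bigr]+o((\log\kappa_n)^2)$, which after normalization by $(\log\kappa_n)^2$ matches $I_\infty(j;F_*)$. Combined with the Setup correction, which tends to $\tfrac12\int_\omega\tfrac{a^3}{12}|H'|^2\,dx'$ because $|v_n|\to 1$ in $L^4$, this recovers the full lower bound of Theorem~\ref{thm:big1}. The required convergences of $j_n/\log\kappa_n$ and $J_n/\log\kappa_n$ in the topologies of Theorem~\ref{thm:big1} follow from the standard properties of the lattice construction.

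\textbf{Main obstacle.} The principal technical difficulty is the coordinated choice of scales $r_n$ and $\kappa_n^{-1}$: $r_n$ must be large enough that vortex-vortex interactions do not contaminate the leading-order energy estimate, yet small enough that the atomic sum $\tfrac{\pi}{\log\kappa_n}\sum d_{k,\ell}\delta_{a_{k,\ell}}$ faithfully approximates $J_3\,dx'$ in the appropriate dual topology. Because the Setup step has already removed every occurrence of $\eps$ from the energy, the remaining estimate is purely planar and reduces to the well-understood 2D Ginzburg--Landau vortex-density recovery, independent of how $\eps_n$ and $\kappa_n$ are related.
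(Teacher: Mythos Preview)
Your Setup step is identical to the paper's: choose $A_n=\Aex\log\kappa_n$, take $u_n(x',x_3)=v_n(x')$, and reduce to the thickness-weighted planar functional $G_\kappa(v_n;F_*)$ via the same $x_3$-averaging computation. From that point on, however, the two arguments diverge in their technical machinery.

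The paper proceeds through a weighted Hodge decomposition $j=U+V+W$ (Lemma~\ref{hodgelemma}) tailored to the inner product $\langle v,w\rangle=\int_\omega a\,v\cdot w$, handling the curl-free part $V+W$ by an explicit $S^1$-valued phase (with integer quantization of the harmonic fluxes when $\omega$ is multiply connected), and attacking the rotational part $U=-\tfrac{1}{a}\nabla^\perp\psi$ via the Green's function $G_a$ of the operator $-\nabla\cdot\tfrac{1}{a}\nabla$. The vortex centers come from Lemma~7.5 of \cite{JS}, the singular measures are smeared onto circles of radius $\kappa_n^{-1}$, and the energy is estimated by a careful diagonal/off-diagonal splitting of the double Green integral $\iint G_a\,d\nu_n\,d\nu_n$, in the spirit of \cite{SSfbp}. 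The weight $a(x')$ enters the leading self-energy term directly through the expansion $G_a(x,y)=-\tfrac{a(x)}{2\pi}\log|x-y|+\gamma(x,y)$.

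Your route---mollify $J_3$ to smooth compactly supported data, lay down a quasi-uniform lattice in cells of side $r_n$, and invoke the standard 2D recovery estimate from \cite{SS}---is also correct and arguably more direct, but two points deserve explicit comment. First, the off-the-shelf lattice upper bound in \cite{SS} is written for the unweighted energy; you should say how the weight $a(x')$ is absorbed (freezing $a$ on each cell $Q_k$ works since $a\in C^1$ and $r_n\to 0$, producing the factor $a(p)\pi\log\kappa_n$ per vortex and hence $\int a\,|J_3|$ in the limit). Second, your single phase correction $\chi_n$ must handle the harmonic component $W$ in multiply connected $\omega$, which requires the same integer-rounding of circulations that the paper carries out in its Step~1. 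With those two remarks made explicit, your argument is complete; the paper's Green-function bookkeeping is replaced by the cell-localization, and each approach trades one kind of explicitness for another.
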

We will prove Theorem~\ref{Glimsup} in section~3.  The construction is essentially two-dimensional, but since the effective applied magnetic field $F_*=\nabla\times B'_*$ is non-constant the procedure is somewhat different from the standard approaches with a constant applied field.

An immediate consequence of the Gamma convergence of $I_{\eps,\kappa}$ to $I_\infty$ is the convergence of minimizers.  To better understand the properties of minimizers of the mean-field limit $I_\infty$ we use con¥vex duality to obtain an equivalent formulation of the problem as a variational inequality.

\begin{proposition}\label{dualprob}
Assume  $j\in \mathcal{Z}$ is a global minimizer of $I_\infty$, and $B_*:\R^2\to\R^2$ with $\nabla\times B_*=F_*$. Then 
$j=B_*-{1\over a}\nabla^\perp\zeta$ where $\zeta\in H^1_0(\omega)$ solves the minimization problem:
\begin{equation}\label{obprob}
\min_{\xi\in H^1_0(\omega) \atop |\xi|\le a(x)/2} \mathcal{E}_\infty(\xi), \quad\text{with}\quad
 \mathcal{E}_\infty(\xi):=
   \int_\omega \left[{1\over a(x)}   |\nabla \xi|^2 - F_*\, \xi\right].
\end{equation}
\end{proposition}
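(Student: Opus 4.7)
The plan is to prove Proposition~\ref{dualprob} by Fenchel--Rockafellar convex duality, realizing the obstacle problem \req{obprob} as the Legendre dual of the primal $I_\infty$. The key tool will be the identity
$$
\frac{1}{2}\|a(x')\,\mu\|_{\Meas(\omega)}
= \sup\bigl\{\langle \xi,\mu\rangle : \xi\in C_c(\omega),\ |\xi(x')|\le a(x')/2\bigr\},
$$
valid for any scalar Radon measure $\mu$ on $\omega$. Applied to $\mu=\nabla'\times j'$ and combined with the quadratic term, this rewrites $I_\infty(j;F_*)$ as a supremum over the obstacle set $K:=\{\xi\in H^1_0(\omega):|\xi|\le a/2\}$ of an expression that is strictly convex and coercive in $j'$, and affine in $\xi$.

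The first main step will be to exchange the infimum over $j$ and the supremum over $\xi$. I would justify this via Sion's minimax theorem (or equivalently by Fenchel--Rockafellar duality): strict convexity and coercivity of the quadratic part in $j$ give lower semicontinuity and existence, while the obstacle set $K$ is weak-$\star$ compact in $L^\infty(\omega)$, and the standard constraint qualification ($\xi\equiv 0$ is an $L^\infty$-interior point of $K$ since $a>0$) rules out a duality gap. Once the swap is effected, the inner minimization reduces to a quadratic program in $m:=j'-B'_*\in L^2(\omega;\R^2)$: an integration by parts $\langle\xi,\nabla'\times j'\rangle=-\langle\nabla^\perp\xi,j'\rangle$ (valid for $\xi\in H^1_0$ by density), followed by completing the square in $m$, yields the explicit optimizer $m^* = \frac{1}{a}\nabla^\perp\xi$ with inner value $-\frac{1}{2}\int_\omega\frac{1}{a}|\nabla\xi|^2$. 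A second integration by parts, using $\nabla'\times B'_*=F_*$, converts the $B_*$-contribution into $\int_\omega\xi F_*$. The resulting dual variational problem is, up to a fixed positive normalization, exactly the obstacle problem \req{obprob}.

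Finally, the KKT optimality relation identifies the primal optimum $m = j'-B'_*$ with $\frac{1}{a}\nabla^\perp\xi^*$; setting $\zeta=-\xi^*$ (a sign convention) then yields the stated representation $j=B_*-\frac{1}{a}\nabla^\perp\zeta$ with $\zeta\in H^1_0(\omega)$, $|\zeta|\le a/2$, solving \req{obprob}. Uniqueness of $\zeta$ follows from strict convexity of $\mathcal{E}_\infty$ and convexity of $K$. The main obstacle in carrying out this program will be the rigorous handling of the inf-sup swap, together with the extension by density of the IBP formula from $C_c$ test functions to $H^1_0$-obstacle profiles: since $\nabla'\times j'$ is only a measure, the pairing $\langle\xi,\nabla'\times j'\rangle$ requires $\xi$ to admit a capacity-quasicontinuous representative, which is standard for $H^1_0(\omega)$ but must be invoked carefully.
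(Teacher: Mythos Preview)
Your approach is correct and hinges on the same engine as the paper --- Fenchel--Rockafellar duality --- but the route differs in a structurally interesting way. The paper does not dualize $I_\infty$ directly in the vector variable $j$. Instead it first invokes a weighted Hodge decomposition (Lemma~\ref{hodgelemma}) of $j-B_*$ with respect to the inner product $\langle v,w\rangle=\int_\omega a\,v\cdot w$, writing $j-B_*=-\tfrac{1}{a}\nabla^\perp\zeta+\nabla\eta+W$ with $\zeta\in H^1_0(\omega)$, $\eta\in H^1(\omega)$, and $W$ in a finite-dimensional harmonic space. Orthogonality immediately gives $I_\infty(j;F_*)=E_\infty(\zeta)+\tfrac12\int_\omega a(|\nabla\eta|^2+|W|^2)$, so minimizers must have $\eta=W=0$, reducing the problem to a scalar one in $\zeta$. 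Only then does the paper apply Fenchel--Rockafellar to $E_\infty(\zeta)=\Psi(\zeta)+\Phi(\zeta)$ (with $\Psi$ the weighted Dirichlet energy and $\Phi$ the total-variation term), computing $\Phi^*$ explicitly to recover the obstacle constraint $|u|\le a/2$.

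Your direct minimax shortcut bypasses the Hodge step entirely: the inner minimization in $m=j-B_*$ plays the role that the orthogonal decomposition plays for the paper, and your completion of the square automatically selects the $\mathcal U$-component. This is arguably cleaner for the purpose at hand, and your handling of the technical points (Sion's hypotheses, extension of the pairing to $H^1_0$ test functions via quasicontinuous representatives) is on target. What the paper's route buys is the structural byproduct that minimizers satisfy $\div(a(j-B_*))=0$ in $\omega$ and $(j-B_*)\cdot\nu=0$ on $\partial\omega$ --- information read off directly from $\eta=W=0$ --- and the Hodge lemma is reused elsewhere (in the recovery-sequence construction). So the two arguments trade economy against modularity.
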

The equation for $\zeta$ is a two-obstacle problem for Poisson's equation, and leads to solutions with free boundaries on the coincidence sets, where $|\zeta|={a(x)\over 2}$.  The coincidence sets form the support of the measure $J$, and indicate the regions of nonzero vorticity of minimizers in the simultaneous $\kappa\to\infty$, $\eps\to 0$ limit.  Obstacle problems of a similar type were obtained in the limit of the two-dimensional Ginzburg-Landau functional with constant vertical applied field by Sandier \& Serfaty \cite{SSfbp}, and in a non-homogeneous setting involving pinning of vortices in \cite{ASS}.  In these papers, there is a single obstacle, as the analogue of the solution $\xi$ is constrained on only one side, and in the 2D setting the equation is of Helmholtz (rather than Poisson) type.  

  This more concrete characterization of the limiting problem gives us a better idea of what minimizers look like for applied fields on the order of the first critical field.  In section~4 we revisit two examples (for superconducting films which approach a disk in the $\eps\to 0$ limit,) which we introduced in our first paper.  In the first example, vortices accumulate in two symmetrically placed subdomains in the disk, one containing positively oriented vortices, and the other antivortices (with negative winding.)  In particular, this implies the rather surprising conclusion that vortices and antivortices can coexist in global minimizers of the three-dimensional Ginzburg--Landau model with a constant applied field.

The second example illustrates the phenomenon of concentration on curves and annular subdomains, which occurs even in a simply connected domain in the thin-film limit.  This example is quite appealing in that the free boundary problem obtained is radially symmetric, and an explicit solution may be calculated, and the changing geometry and topology of the regions of vorticity are explicitly shown.  Previous examples of concentration on curves or annuli were found for minimizers in annular domains (see \cite{AAB}, \cite{AB1}, \cite{AB2}, \cite{Kachmar}, \cite{Rougerie}.) Determining the asymptotic distribution of vortices on curves in the domain requires more delicate estimates very near to the lower critical field; this is done in \cite{ABMi}.

Finally, a different type of thin film problem has recently been studied by Contreras \& Sternberg \cite{ContStern} and Contreras \cite{Cont}.  In their setting, the superconductor is thin shell, built from depositing an $\eps$-thick coating on a fixed two-dimensional surface in $\R^3$.  The limiting problem in this case is a Ginzburg--Landau model on an embedded 2-manifold, and they obtain remarkable results connecting the lower critical field and the appearance of vortices to the geometry of the limiting surface.

Several results on Gamma convergence or the convergence of local minimizers have also been proven for three-dimensional models of superconductivity (or Bose-Einstein condensation) without assuming thin-film geometry.  A recent paper by Baldo, Jerrard, Orlandi, \& Soner \cite{BJOS} proves Gamma convergence under very general hypotheses.  Results on stable vortex solutions for these models in various specific geometries may be found in \cite{MontSternZ}, \cite{JMS}, \cite{J_BEC}, \cite{ABMont1}, \cite{ABMont2}.

\section{Compactness and lower bound}

In this section we prove two parts of the Gamma-convergence result, the compactness of energy-bounded sequences and the lower bound inequality.
As always when dealing with the Ginzburg--Landau functionals, the gauge invariance of the functionals is an issue.  The choice of the space $\mathcal{A}$ fixes a gauge (see \eqref{eq:gauge_choice} and \eqref{Adef} above.)  We will require the following essential lemma:


\begin{lemma}[Lemma 3.1 in \cite{GP}]\label{lem:GP}
Let $g \in L^2(\R^3;\R^3)$ such that $\div g = 0$ in $\mathcal{D}'(\R^3)$.
Then there is a unique $B \in H^1(\R^3;\R^3)$ such that $\grad \times B = g$ and $\div B = 0$.

\end{lemma}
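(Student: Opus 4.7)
The plan is to build $B$ explicitly via a Biot--Savart representation on the Fourier side and then to obtain uniqueness from a Liouville-type rigidity argument. For existence I would define
\[
\widehat B(\xi) := \frac{i\xi \times \widehat g(\xi)}{|\xi|^2}.
\]
Since $\div g = 0$ translates to $\xi\cdot\widehat g(\xi) = 0$ almost everywhere, $\widehat g(\xi)$ is perpendicular to $\xi$, so $|\xi \times \widehat g(\xi)| = |\xi|\,|\widehat g(\xi)|$ and $\big\| |\xi|\widehat B\big\|_{L^2} = \|\widehat g\|_{L^2}$. By Plancherel this places $B$ in the homogeneous Dirichlet-norm space $\HHr$, with $\|\grad B\|_{L^2}=\|g\|_{L^2}$.

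To verify the two equations I would compute directly on the Fourier side. Using the double cross product identity $\xi \times (\xi \times \widehat g) = (\xi\cdot\widehat g)\xi - |\xi|^2 \widehat g = -|\xi|^2 \widehat g$ (the first term vanishes by $\div g = 0$), one gets $\widehat{\curl B}(\xi) = i\xi\times \widehat B(\xi) = \widehat g(\xi)$, hence $\curl B = g$; and the scalar triple product immediately gives $\widehat{\div B}(\xi) = i\xi\cdot \widehat B(\xi) = 0$, so $\div B = 0$.

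For uniqueness, if $B_1,B_2 \in \HHr$ are two solutions and $B := B_1 - B_2$, then $\curl B = 0$ and $\div B = 0$; the vector identity $-\Delta B = \curl\curl B - \grad\div B$ shows that each component of $B$ is harmonic on $\R^3$. Since $\grad B \in L^2$, a Liouville argument (equivalently, the Fourier statement $|\xi|^2\widehat B \equiv 0$ away from the origin) forces $B$ to be a constant vector. But the ambient space $\HHr$ is the closure of $C_0^\infty$ divergence-free fields in the Dirichlet norm, so its only constant element is $0$, and $B \equiv 0$.

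The main technical obstacle is low-frequency care: the symbol $\xi/|\xi|^2$ is not locally integrable, so to make the Fourier construction rigorous I would first approximate $g$ by compactly supported smooth divergence-free fields (using density in the divergence-free subspace of $L^2$) and pass to the limit, or equivalently write $B = \curl((-\Delta)^{-1}g)$ and invoke standard Calder\'on--Zygmund bounds for $\grad\curl(-\Delta)^{-1}$ on $L^2$. The same low-frequency care is precisely what makes the definition of $\HHr$ (as a closure of compactly supported smooth fields, rather than the larger $\{B:\grad B \in L^2\}$) essential, since without it nontrivial constants would sit in the kernel and uniqueness would fail.
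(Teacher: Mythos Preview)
The paper does not prove this lemma at all: it is quoted as Lemma~3.1 of \cite{GP} and invoked without argument, so there is no in-paper proof to compare against.

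Your Fourier/Biot--Savart construction is the standard route and is correct. One small point worth flagging: the statement as printed in the paper places $B$ in $H^1(\R^3;\R^3)$, but your argument (and the surrounding context in the paper, which immediately speaks of the Dirichlet norm on $\HHr$) really yields $B\in\HHr$, i.e.\ $\nabla B\in L^2$ and hence $B\in L^6$ by Sobolev, not $B\in L^2$. This is almost certainly the intended reading, and your proof matches it. Your handling of the low-frequency issue and of uniqueness (harmonic plus $\nabla B\in L^2$ forces $B$ constant, and $\HHr$ contains no nonzero constants) is exactly right.
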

 As a consequence, it follows that
 $$  \| B \|_{\HH}=\left[\int_{\R^3} |\nabla\times B|^2 \, dx \right]^{\frac12}  $$
 is equivalent to the usual (Dirichlet) norm on the space $\HHr$.


\begin{proof}[ of Theorem \ref{thm:big1}]

Let $K := \sup_{n \in \NN} I_{\eps_n,\kappa_n}(u_n,A_n) < \infty$.
From the energy bound we immediately deduce that,
\begin{gather} \label{lb1}
   \int_\Omega (|u_n|-1)^4 \le \int_\Omega (|u_n|^2-1)^2
         \le {K(\ln\kappa_n)^2\over \kappa_n^2} \to 0, \\
    \frac{h'_n}{\log \kappa_n} - H'  \to 0 \quad \text{ in } L^2(\R^3;\R^2). \label{lb2}     
\end{gather}
In particular, $|u_n|\to 1$ in $L^4(\Omega)$.  The vertical components of the magnetic field are bounded via the energy bound, and thus along a subsequence (which we continue to denote $u_n,A_n$),
we may conclude the weak convergence,
 \begin{equation}
 \frac{{h_3}_{n}}{\log \kappa_{n}} - H_3 \wto \ell  \quad \text{ in } L^2(\R^3).  \label{lb3}
\end{equation}
As the vectors $[{h_n\over\log\kappa_n}-H]\wto \tilde H:=(0,0,\ell)$ in $L^2(\R^3;\R^3)$, and each $\div h_n=0$ (in the sense of distributions,) we may conclude that the limit is also divergence-free, $\div \tilde H=0$.
As a consequence of \eqref{lb3} and Lemma~\ref{lem:GP}, we also conclude that there exists $\tilde A\in \HHr$ with $\nabla\times \tilde A= \tilde H=(0,0,\ell)$ and
\begin{equation}\label{Astar}  {A_n \over \log\kappa_n}- \Aex \to \tilde A,\end{equation}
weakly in $\HHr$, and in the norm topology on $L^p(\Omega)$, $1\le p<6$.
Since $\tilde H=(0,0, \ell) \in L^2(\R^3;\R^3)$ with $\div \tilde H=0$, we conclude that $\partial_3 \ell =0$ (in the sense of distributions,) and thus $\ell=0$, and also $\tilde H=0=\tilde A$ (by Lemma~\ref{lem:GP}.)  In particular, \eqref{Astar} implies
\begin{equation}\label{lb4}
{A_n \over \log\kappa_n}- \Aex \to 0 \quad
\text{weakly in $\HHr$, and in the norm on $L^p(\Omega)$, $1\le p<6$.}
\end{equation}

To obtain the lower bound we adapt the argument of  \cite{SS_product}.
Expanding the quadratic term in the energy bound, we obtain:
\begin{align*}
2K &\ge 2I_{\eps_n,\kappa_n}(u_n, A_n) \\
	& \geq (\log\kappa_n)^{-2}\int_{\Omega} \left( |\grad' u_n|^2 - 2 A'_n \cdot (iu_n,\grad'u_n) + |u_n|^2 |A'_n|^2 \right)  dx\\
	&\ge 
	\int_{\Omega} \left( 
	    \frac12  \left|\frac{\grad' u_n}{\log \kappa_n} \right|^2 
	      - \left|\frac{A'_n}{\log \kappa_n}\right|^2 |u_n|^2 \right)
	      \\
	 &= \int_{\Omega} \left( 
	    \frac12  \left|\frac{\grad' u_n}{\log \kappa_n} \right|^2 
	      - \left|\frac{A'_n}{\log \kappa_n}\right|^2 (|u_n|^2-1)
	     -  \left|\frac{A'_n}{\log \kappa_n}\right|^2
	       \right)
\end{align*}
By \eqref{lb4}, the last term is bounded, and
$$  \int_\Omega \left|\frac{A'_n}{\log \kappa_n}\right|^2 
   \left||u_n|^2-1\right| \le 
   \left\| {A'_n\over \log\kappa_n}\right\|_{L^4}^2
    \left\| |u_n|^2-1 \right\|_{L^2} \le C{\log\kappa_n\over
    \kappa_n},
$$
by the energy bound and the $L^p$ boundedness of $A'_n\over \log\kappa_n$.
Thus we have
\begin{equation}\label{eq:gradprime}  \int_{\Omega}  \left|\frac{\grad' u_n}{\log \kappa_n} \right|^2
\le C,
\end{equation}
with constant $C$ depending on the energy bound $K$.  
By a similar calculation, we may also obtain the estimate
$$
\int_{\Omega} {1\over\eps_n^2}\left|
{\partial_3 u_n\over\log\kappa_n}\right|^2 \, dx \le C,
$$
and so we have strong convergence in the $x_3$-direction,
\begin{equation}\label{eq:strongconv}
{\partial_3 u_n\over\log\kappa_n} \to 0 \quad \text{ in } L^2(\Omega;\C)
\end{equation}

We now turn to the currents, $j_n:= (iu_n,\nabla u_n)$.  Following \cite{JS} we normalize the currents as follows,
$$   \tilde j_n:= {j_n\over |u_n|\log\kappa_n} 
     = {(iu_n,\nabla u_n)\over |u_n|\log\kappa_n}.  $$
We observe that each component of 
  $\tilde j_n=(\tilde j_{1,n}, \tilde j_{2,n}, \tilde j_{3,n})$ is (for fixed $n$) pointwise (a.e.) bounded,
\begin{equation}\label{eq:jtilde}
   |\tilde j_{k,n}| \le {|\partial_k u_n|\over\log\kappa_n} , \quad k=1,2,3, 
\end{equation}
so $\tilde j_n$ is well defined almost everywhere in $\Omega$.
Moreover, from \eqref{eq:gradprime} and \eqref{eq:strongconv} it follows that there exists $j=(j',0)\in L^2(\Omega;\R^3)$ such that (along a subsequence)
$$  \tilde j'_n \wto j', \qquad \tilde j'_{3,n}\to 0 $$
in $L^2(\Omega)$.  Writing
$$  {j_n\over\log\kappa_n} = \tilde j_n + (|u_n|-1) \tilde j_n,  $$
we recall that $(|u_n|-1)\to 0$ in $L^4(\Omega)$ (see \eqref{lb1},) and thus obtain that 
\begin{equation}\label{eq:current}
   {j'_n\over\log\kappa_n}\wto j', \qquad {j_{3,n}\over\log\kappa_n}\to 0
\qquad\text{in $L^{4/3}(\Omega)$.}
\end{equation}
We will require this fact later on in determining the limit functional.

We continue as in the proof of Theorem 2 of \cite{SS_product}, with some simplification due to the thin film limit, and our modification of the currents.  Let $e_1, e_2, e_3$ be the standard basis in $\R^3$, and define vector fields $X_k=h_k e_k$, $k=1,2,3$, with $h_k\in C_0(\Omega)$ and $|h_k(x)|\le 1$ for all $x\in\Omega$, $k=1,2,3$.  By \eqref{eq:gradprime}, we have
\begin{equation}\label{eq:weakconv}  
{|X_k\cdot\nabla' u_n|\over \log\kappa_n} = 
        {|h_k\,\partial_k u_n|\over \log\kappa_n} \wto \phi_{X_k}, \quad k=1,2,3,
\end{equation}
weakly in $L^2(\Omega)$.  Passing to the weak limit in the bound \eqref{eq:jtilde}, we conclude that
$$  |X_k\cdot j| = |h_k\, j_k| \le \phi_{X_k}, \qquad k=1,2,3, $$
pointwise a.e.~in $\Omega$.  By \eqref{eq:strongconv}, $\phi_{X_3}=0$.
We also define the defect measures $\nu_{X_k}$ corresponding to the weak convergence in \eqref{eq:weakconv}:
\begin{equation}\label{eq:defectmeas}
\left|\frac{X_k\cdot\nabla' u_n}{\log \kappa_n}\right|^2 
  \wto |\phi_{X_k}|^2 + \nu_{X_k} \quad \text{ in the sense of measures},
\end{equation}
for $k=1,2,3$. Because of the strong convergence in \eqref{eq:strongconv}, it follows that $\nu_3\equiv 0$.

For the Jacobians $J_n$, we apply Theorem 1 in \cite{SS_product}:
by the energy bound,
$$ E_{\kappa_n}(u_n;\Omega):= 
   \int_\Omega \left( \frac12 |\nabla u_n|^2
    + {\kappa_n^2\over 4} (|u_n|^2-1)^2\right) \le C[\ln\kappa_n]^2,
    $$
with constant $C$ independent of $n$ (using the estimates \eqref{eq:gradprime}, \eqref{eq:strongconv}, and \eqref{lb1}), we may conclude that
$$
\frac{J_n}{\log \kappa_n} \stackrel{\star}{\wto} J^*,
$$
in the weak${}^*$ topology on $[C_0^{0,\alpha}(\Omega)]^*$, for $0<\alpha<1$.  Moreover, the limiting Jacobian is a Radon measure-valued two-form. 
Furthermore, the same theorem relates the limiting Jacobian to the defect measure $\nu_{X_k}$ via a product formula (see \eqref{prodform} below.)
We prove the following properties of the limiting Jacobian and currents:

\begin{lemma}\label{lem:jacob}
The limiting Jacobian $J^*=\frac12 dj^*$ has the form $J^*= J^*_3\, dx^1\wedge dx^2$ with $J_3=J_3(x')$, and the limiting current $j_*\in \mathcal{Z}$.
\end{lemma}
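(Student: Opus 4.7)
The plan is to combine the compactness assertions \eqref{t3}--\eqref{t4} with the strong convergence $\partial_3 u_n/\log\kappa_n\to 0$ from \eqref{eq:strongconv} and the product estimate of \cite{SS_product} to pin down the structure of $j_*$ and $J^*$. First, the pointwise bound $|\tilde j_{n,3}|\le |\partial_3 u_n|/\log\kappa_n$ from \eqref{eq:jtilde}, combined with \eqref{eq:strongconv}, forces $\tilde j_{n,3}\to 0$ strongly in $L^2(\Omega)$, so the weak $L^2$-limit satisfies $j_{*,3}=0$. Using $|u_n|\to 1$ in $L^4$ and the identity $j_n/\log\kappa_n = |u_n|\tilde j_n$, this is the content of \eqref{eq:current}: $j_n/\log\kappa_n \wto j_* = (j'_*,0)$ weakly in $L^{4/3}(\Omega;\R^3)$. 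Since the distributional Jacobian is a continuous linear operation on currents, $J_n/\log\kappa_n = \tfrac12\,dj_n/\log\kappa_n \to \tfrac12\,dj_*$ in $\mathcal{D}'(\Omega)$, and uniqueness of distributional limits identifies $J^* = \tfrac12\,dj_*$ as Radon measures.

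The key step is to show that the vector-field components $J^*_1$ and $J^*_2$ (coefficients of $dx^2\wedge dx^3$ and $dx^3\wedge dx^1$, respectively) vanish. This is where the product estimate of \cite{SS_product} is essential: applied with test vector fields $X=\phi\,e_k$ ($k=1,2$) and $Y=\psi\,e_3$, it dominates the pairings of $J^*$ in the directions $e_k\wedge e_3$ by quantities controlled by the defect measures $\nu_{X_k}$, $\nu_{X_3}$ and the limits $\phi_{X_k},\phi_{X_3}$ introduced in \eqref{eq:weakconv}--\eqref{eq:defectmeas}. By \eqref{eq:strongconv}, the convergence in the $x_3$-direction is strong, so $\phi_{X_3}\equiv 0$ and $\nu_{X_3}\equiv 0$; the product estimate then forces $J^*_1\equiv 0$ and $J^*_2\equiv 0$ as measures.

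Using $j_{*,3}=0$, the Jacobian identities $J^*_1 = -\tfrac12\partial_3 j_{*,2}$ and $J^*_2 = \tfrac12\partial_3 j_{*,1}$ combined with the previous step yield $\partial_3 j_{*,k}=0$ in $\mathcal{D}'(\Omega)$ for $k=1,2$, so each $j_{*,k}$ depends only on $x'$; write $j_{*,k}(x)=j'_{*,k}(x')$ with $j'_*\in L^2(\omega;\R^2)$. Hence $j_*=(j'_*(x'),0)$ and $J^* = \tfrac12\,dj_* = J^*_3(x')\,dx^1\wedge dx^2$ with $J^*_3\in\Meas(\omega)$, which is exactly the condition $j_*\in\mathcal{Z}$. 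The main obstacle is the vanishing of $J^*_1$ and $J^*_2$: the naive estimate gives only $\|J_{k,n}/\log\kappa_n\|_{L^1} \le \|\nabla' u_n\|_{L^2}\|\partial_3 u_n\|_{L^2}/\log\kappa_n = O(\eps_n\log\kappa_n)$ for $k=1,2$, which does not tend to zero without assumptions on the relative rates of $\eps_n,\kappa_n$, and it is precisely the product-formula structure of \cite{SS_product} that allows one to conclude uniformly in those rates.
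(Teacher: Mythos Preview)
Your proposal is correct and follows essentially the same route as the paper: both arguments use the strong convergence \eqref{eq:strongconv} to force $j_{*,3}=0$ and $\nu_{X_3}=0$, then invoke the product formula \eqref{prodform} from \cite{SS_product} with one vector field in the $e_3$ direction to kill $J^*_1$ and $J^*_2$, and finally read off $\partial_3 j'_*=0$ from the vanishing of $J^*_1,J^*_2$. The only cosmetic difference is that the paper deduces $\partial_3 J^*_3=0$ from $dJ^*=0$, whereas you obtain it directly from $J^*=\tfrac12\,dj_*$ once $j'_*=j'_*(x')$; both are valid.
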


\begin{proof}[]  We make use of the product formula from \cite{SS_product} in the case where $N_\kappa=O(\ln\kappa)$, which we review here.  Let $E$ be a bounded smooth domain in $\R^3$, and $v_\kappa\in H^1(E,\mathbb{C})$ satisfying
$$  E_\kappa(v_\kappa;E):= \int_E \left( \frac12 |\nabla v_\kappa|^2
    + {\kappa^2\over 4} (|v_\kappa|^2-1)^2\right) \le C[\ln\kappa]^2,
    $$
for constant $C$ independent of $\kappa$.  Let $X,Y$ be continuous, compactly supported vector fields in $E$, and $\nu_X$, $\nu_Y$ the defect measures (defined as in \eqref{eq:defectmeas}) for $v_\kappa$ as $\kappa\to\infty$.  Then, the normalized Jacobians 
${J_\kappa\over\log\kappa}\stackrel{\star}{\wto} J$ in $(C^\gamma_0(\Omega))'$
 for all $\gamma>0$, and the defect measures are related to the limiting Jacobian via:
\begin{equation}\label{prodform}
| \nu_X|(E)\, |\nu_Y|(E) \ge \left| \int_E J(X,Y) \right|^2.
\end{equation}
Here we denote by $|\nu|(E)$ the total variation of the measure $\nu$ over the set $E$.

We note as above that for any $E\subset\Omega$, 
$E_{\kappa_n}(u_n;E)\le [\ln\kappa_n]^2 I_{\eps_n,\kappa_n}(u_n, A_n))\le C[\ln\kappa_n]^2. $
Let $E$ be any open ball contained in $\Omega$, and $X_k=h_k\, e_k$,  with $h_k\in C_0(E)$ and $|h_k|\le 1$, $k=1,2,3$.  Applying the product formula we then obtain,
$$  0= |\nu_{X_1}|^{\frac12}(E) |\nu_{X_3}|^{\frac12}(E)
  \ge \left| \int_E J^*(X_1,X_3) \right| 
     = \left| \int_E h_1\, h_3\, J^*(e_1,e_3) \right| . $$
Taking the supremum over all such $h_1, h_3$, we conclude that, as a Radon measure, $J^*(e_1,e_3)=0$ in the ball $E$.  By an analogous computation with $X_2=h_2\, e_2$ and $X_3$ (as above), we also have
$J^*(e_2,e_3)=0$ in the ball $E$.  This holds for any ball $E\subset\Omega$, and thus these measures vanish identically in $\Omega$, and thus
the Jacobian has the form $J^* =J^*_3\, dx^1\wedge dx^2$. 

Furthermore, since $J_n=\frac12 d j_n$ for each $n$, it follows that $dJ_n=0$ (in the sense of distributions.)  Normalizing by $\log\kappa_n$ and passing to the limit, we retain  $d J^* = 0$, and hence  $\partial_3 J^*_3 = 0$ in $\mathcal{D}'(\Omega)$, so $J^*_3=J_3^*(x')$.
This also implies that (in the sense of $\mathcal{D}'(\Omega)$,)
\begin{align*}
0 = J^*_1 & = \partial_2 j_{*3} - \partial_3 j_{*2} = - \partial_3 j_{*2}, \\
0 = J^*_2 & = \partial_3 j_{*1} - \partial_1 j_{*3} = \partial_3 j_{*1}.
\end{align*}
Thus, the limiting current must have the form
$j_* = \big(j'_*(x'),0\big)$ and $J^*=\frac12\nabla\times j_*\in\Meas(\Omega)$, and hence $j_*\in \mathcal{Z}.$

\end{proof}

It remains to verify the lower bound inequality.  From the definition of the defect measures and the product formula from Theorem~1 of \cite{SS_product},
\begin{align} \nnn
\liminf_{n\to\infty} \int_\Omega  
   \left|{\nabla' u_n\over \log\kappa_n}\right|^2
   & \ge \sum_{k=1,2} \liminf_{n\to\infty} \int_\Omega  
   \left|{X_k\cdot\nabla' u_n\over \log\kappa_n}\right|^2  \\
   \nnn
   &\ge |\nu_{X_1}|(\Omega) + |\nu_{X_2}|(\Omega) 
       + \int_\Omega \left( \phi_{X_1}^2 + \phi_{X_2}^2\right) \\
       \nnn
   &\ge 2 \left| \int_\Omega J^*(X_1,X_2)\right| 
      + \int_\Omega (X_1\cdot j_*)^2 + (X_2\cdot j_*)^2 \\
    &= 2 \left| \int_\Omega h_1\, h_2\, J^*(e_1,e_2)\right| +
          \int_\Omega \left( h_1^2 |j_*\cdot e_1|^2 + h_2^2 |j_*\cdot e_2|^2\right).  \label{eq:bigest}
\end{align}
The above estimate is valid for any $h_k\in C_0(\Omega)$ with $|h_k(x)|\le 1$, $k=1,2$.  We choose these functions to obtain an estimate in terms of the total variation of the measure $J_3=J^*(e_1,e_2)$.  By the Hahn decomposition, we may write $J_3=\mu_+-\mu_-$ for mutually singular, nonnegative finite measures $\mu_+, \mu_-$, supported on the disjoint sets $E_+, E_-\in\Omega$, respectively.  Take sequences $h_{1,i},h_{2,i}\in C_0(\Omega)$ with $|h_{k,i}|\le 1$, $k=1,2$, such that
$$  h_{1,i}\to 1, \qquad  h_{2,i}\to \chi_{E_+}-\chi_{E_-}$$
pointwise a.e.~in $\Omega$.  Passing to the limit $i\to \infty$ on the right hand side of \eqref{eq:bigest} (using the Lebesgue dominated convergence theorem,) we conclude that
\begin{equation}\label{lb10}  \liminf_{n\to\infty} \int_\Omega  
   \left|{\nabla' u_n\over \log\kappa_n}\right|^2 \ge 
      2\,|J^*_3|(\Omega) + \int_\Omega |j_*|^2.
\end{equation}

Finally, we derive the form of the lower bound for the full energy.  First,
from the strong $L^4$ convergence \eqref{lb1} of $|u_n|$, the weak $L^{\frac43}$ convergence \eqref{eq:current} of the normalized currents, the strong $L^p$ ($1\le p<6$) convergence of the vector potentials \eqref{lb4}, and the lower bound \eqref{lb10}, we may conclude that
\begin{align*}
 \liminf_{n\to\infty} 
I_{\eps_n,\kappa_n}(u_n,A_n) 
	&\geq  
	\liminf_{n\to\infty} 
	\frac1{2[\log \kappa_n]^2} \int_\Omega \left(  |\grad' u_n|^2 - 2A_n' \cdot j_n' + |A'_n|^2 
	  + \left(|u_n|^2-1\right) |A_n'|^2 \right) \, dx  \\
&\ge  \|J^*\| + \frac12\int_\Omega\left[ |j_*|^2  -2 \Aex\cdot j_*
+|\Aex|^2 \right]dx
\end{align*}


Since both $J_3=J_3(x')$ and $j_*' = j_*'(x')$, we may integrate out the variable $x_3$, to reduce to a two-dimensional total variation, weighted by the film thickness function $a(x')$, 
$$
\|J_3\|_{\Meas(\Omega)} = \|a(x')J_3\|_{\Meas(\omega)}.
$$
The limiting vector potential $\Aex$ (defined in \eqref{eq:gauge_choice}) is $x_3$-dependent, but this dependence may be averaged out (to produce the desired effective field $F_*$). Indeed,  we decompose ${\Aex}'$ as follows:
$$   {\Aex}' = \left(- \frac12 H_3 x_2\, , \, \frac12 H_3 x_1\right) +
\left( H_2x_3 \, , \, -H_1x_3\right) =: \Aexp + 
\left( H_2x_3 \, , \, -H_1x_3 \right).
$$
Expanding the energy and integrating out $x_3$, we have:
\begin{align}
\int_\Omega |j'_*-{\Aex}'|^2 \, dx
	& = \int_\Omega |j'_*-{\Aexp} - (H_2,-H_1) x_3|^2 \, dx \notag\\
	& = \int_\omega a(x') |j'_*-{\Aexp}|^2 
	- 2 \int_\omega (j'_*-{\Aexp}) \cdot (H_2,-H_1) \int_{f(x')}^{g(x')} x_3 \, dx_3 \, dx' \notag \\
	&\qquad 
	+ \int_\omega \big|H_2,-H_1)\big|^2
	   \int_{f(x')}^{g(x')} x_3^2 \, dx_3 \, dx' \notag\\
	& = \int_\omega a(x') |j'_*-{\Aexp}|^2 
	- 2 \int_\omega a(x'){\ts  \frac{(f+g)}{2}} 
	(j'_*-{\Aexp})
	  \cdot (H_2,-H_1) \, dx'  \notag \\
	&\qquad 
	  + \int_\omega a(x') {\ts \frac{(f^2 + fg + g^2)}{3}}
	  \big|(H_2,-H_1)\big|^2\, dx' \notag\\
	& = \int_\omega a(x') \left|j'_*-{\Aexp} - {\ts \left(\frac{f+g}{2}\right)} (H_2,-H_1) \right|^2  \, dx'
	+ \int_\omega{\ts  \frac{a^3(x')}{12}} \big| H'\big|^2\, dx'. \label{eq:split_curr}
\end{align}

We conclude that
$$
\liminf_{n\to \infty} I_{\eps_n,\kappa_n}(u_n,A_n) 
	\geq \|a(x')J_3\|_{\Meas(\omega)}
	 + \frac{1}{2} \int_\omega
	  a(x')\left( \left|j'_* - B'_*\right|^2 
	  + {\ts  \frac{a^2(x')}{12}}
	   \big| H'\big|^2 \right) \, dx' \\
$$
with 
\begin{equation}\label{Bstar}
B'_* := {\Aexp}+{\ts \left(\frac{f+g}{2}\right)} (H_2,-H_1).
\end{equation}
Since $\nabla'\times B'_*= F_*$ with $F_*$ as given in \eqref{Fdef},
this concludes the proof.
\end{proof}

\begin{remark}\label{remark}
The fact that the same $\Gamma$-limit is obtained regardless of how the two parameters $\eps_n\to 0$ and $\kappa_n\to\infty$ is somewhat remarkable.  In fact, this is particular to the case where $h^{ex}=O(\log\kappa)$.  To see this, take a rectangular solid domain $\Omega_\eps=(0,a)\times (0,b)\times (0,\eps)$ with horizontal applied field $\mathbf{h}^{ex}=({H_1(\kappa)\over\eps},0,0)$.  The two-dimensional minimizer of the Ginzburg-Landau energy $(w_\kappa(x_2,x_3), A_\kappa(x_2,x_3))$ may be used as a test function, with energy (see Theorem~8.1 of \cite{SS})
$$   \mathbf{I}_{\eps,\kappa}(w_\kappa,A_\kappa) \simeq
    |\Omega_\eps| {\mathbf{h}^{ex}\over 2}\log {\kappa\over\sqrt{|\mathbf{h}^{ex}|}} \simeq  H_1(\kappa)\log {\kappa^2\eps\over H_1(\kappa)}.
$$
A ``thin film'' solution, that is, one with $u=u(x_1,x_2)$ and $A=\Aex$ has energy given by \eqref{tfilm}, with effective field $F=0$, that is
$$   \eps\,G_\kappa(u)=\eps\,\int_\omega a(x')\left\{ \frac12 |\nabla u|^2
            + {\kappa^2\over 4} (|u|^2-1)^2 + 
      {(a(x'))^2\over 24}|{H_1(\kappa)}|^2\,|u|^2\right\} dx'.
$$
For $\kappa$ large, the minimizer is essentially $u=1$, and so a thin film configuration has energy of the order of $\eps H_1(\kappa)^2$.  Thus, if
$$  H_1(\kappa)\log {\kappa^2\eps\over H_1(\kappa)} \ll
                     \eps H_1(\kappa)^2, $$
we expect that the minimizers do not have thin film form. To take a concrete example, if the magnetic field $H_1(\kappa)=\kappa$, 
then the horizontal vortex lattice is preferable for $\eps\gg{1\over\kappa}$ with $\kappa$ large.  
\end{remark}

\section{The recovery sequence}

This section is devoted to proving Theorem~\ref{Glimsup}, namely the existence of a recovery sequence and the $\Gamma-\limsup$ inequality.
In both this section and the following one we require the following Hodge decomposition with respect to the weighted inner product,
$$  \langle v,w\rangle = \int_\omega a(x) \, v\cdot w\, dx'  $$
on $L^2(\omega;\R^2)$.  We define the following subspaces:
\begin{align}
\mathcal U &= \left\{ -{1\over a}\nabla^\perp\psi, \ \psi\in H^1_0(\omega;\R)\right\}, \notag\\
\mathcal V &= \left\{ \nabla \zeta, \ \zeta \in H^1(\omega;\R) \right\}, \label{hodgespaces}\\
\mathcal W &= \left\{ W\in C^1(\omega;\R^2), \, \ \nabla^\perp\cdot W=0, \ 
     \nabla\cdot(aW)=0, \ W\cdot\nu=0 \ \text{on} \ \partial\omega\right\}.\notag
\end{align}

\begin{lemma}\label{hodgelemma}
Any $Z\in L^2(\omega;\R^2)$ admits a unique orthogonal decomposition
$Z= U + V + W$ with $U\in \mathcal U$, $V\in\mathcal V$, $W\in\mathcal W$, with respect to the inner product $\langle\cdot,\cdot\rangle$.  The space $\mathcal W$ is finite dimensional:  in case $\omega$ is simply connected, $\mathcal W=\{0\}$, and in case $\omega=\omega_0\setminus\cup_{i=1}^m \omega_i$ has $m$ holes, $\dim(\mathcal W)=m$.
\end{lemma}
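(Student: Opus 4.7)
The plan is to verify pairwise orthogonality of the three subspaces first, then construct the $\mathcal{V}$-component via a weighted Neumann problem, and finally use stream functions to split the residual between $\mathcal{U}$ and $\mathcal{W}$.

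\textbf{Orthogonality.} For $U=-\frac{1}{a}\nabla^\perp\psi\in\mathcal{U}$ with $\psi\in H^1_0(\omega)$ and $V=\nabla\zeta\in\mathcal{V}$, the weight cancels and $\langle U,V\rangle=-\int_\omega\nabla^\perp\psi\cdot\nabla\zeta$, which equals $-\int_{\partial\omega}\psi\,\nabla\zeta\cdot\tau\,ds=0$ after rewriting the integrand as $\partial_1(\psi\partial_2\zeta)-\partial_2(\psi\partial_1\zeta)$ and using $\psi|_{\partial\omega}=0$. For $\langle V,W\rangle$, integration by parts gives $\int_\omega a\nabla\zeta\cdot W=-\int_\omega\zeta\,\div(aW)+\int_{\partial\omega}\zeta\,aW\cdot\nu\,ds$, and both terms vanish by the defining conditions of $\mathcal{W}$. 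For $\langle U,W\rangle$, the analogous manipulation together with $\curl W=0$ and $\psi|_{\partial\omega}=0$ eliminates every boundary and bulk contribution.

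\textbf{Existence.} Given $Z\in L^2(\omega;\R^2)$, I would first solve the weighted Neumann problem
\begin{equation*}
\div(a\nabla\zeta)=\div(aZ)\ \text{in}\ \omega, \qquad a\nabla\zeta\cdot\nu=aZ\cdot\nu\ \text{on}\ \partial\omega,
\end{equation*}
weakly via Lax--Milgram on $H^1(\omega)/\R$; well-posedness follows because $a$ is smooth and strictly positive on $\overline\omega$, and the compatibility condition is automatic from the divergence theorem. Setting $V=\nabla\zeta\in\mathcal{V}$, the residual $\widetilde Z:=Z-V$ satisfies $\div(a\widetilde Z)=0$ in $\omega$ and $a\widetilde Z\cdot\nu=0$ on $\partial\omega$. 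By classical 2D Hodge theory there exists a global stream function $\widetilde\psi\in H^1(\omega)$ with $a\widetilde Z=-\nabla^\perp\widetilde\psi$; the tangency condition forces $\widetilde\psi$ to be constant on each connected boundary component $\partial\omega_i$, $i=0,1,\ldots,m$, and I normalize $\widetilde\psi|_{\partial\omega_0}=0$, denoting the remaining values by $c_i=\widetilde\psi|_{\partial\omega_i}$.

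\textbf{Splitting the residual and dimension of $\mathcal{W}$.} I would next characterize $\mathcal{W}$ through a \emph{dual} stream function: every $W\in\mathcal{W}$ has $aW=-\nabla^\perp\phi$ for some $\phi$ locally constant on $\partial\omega$, and a direct computation shows that $\curl W=-\div(\tfrac{1}{a}\nabla\phi)$, so $W\in\mathcal{W}$ corresponds to a solution of the weighted Dirichlet problem $\div(\tfrac{1}{a}\nabla\phi)=0$ with prescribed boundary values $\phi|_{\partial\omega_i}=d_i$, modulo an overall constant (which I fix by $d_0=0$). This map is a linear bijection from $\R^m\cong\{(d_1,\ldots,d_m)\}$ onto $\mathcal{W}$, giving $\dim\mathcal{W}=m$ (and $\mathcal{W}=\{0\}$ when $\omega$ is simply connected); elliptic regularity applied to the weighted Dirichlet problem automatically gives the $C^1$ (in fact $C^\infty$) regularity demanded in the definition. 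Choosing $W\in\mathcal{W}$ with $d_i=c_i$ for $i=1,\ldots,m$, the function $\psi:=\widetilde\psi-\phi$ vanishes on every component of $\partial\omega$, hence $\psi\in H^1_0(\omega)$, and $U:=-\tfrac{1}{a}\nabla^\perp\psi\in\mathcal{U}$ completes $Z=U+V+W$. Uniqueness of the decomposition is then automatic from orthogonality: if $U+V+W=0$, then $\|U\|^2+\|V\|^2+\|W\|^2=0$.

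The main obstacle is the multiply connected case, where the stream function $\widetilde\psi$ need not lie in $H^1_0$: its $m$ nonzero boundary constants $c_1,\ldots,c_m$ are exactly the topological obstruction. The key observation that makes the construction work is that $\mathcal{W}$ is parametrized by precisely these $m$ constants via the dual Dirichlet problem, so $\mathcal{W}$ has exactly the right dimension to absorb this obstruction and reduce $\widetilde\psi$ to an element of $H^1_0(\omega)$.
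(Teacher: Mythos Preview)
Your argument is correct and close in spirit to the paper's, but the construction is organized differently. The paper works first with smooth $Z$ (extending to $L^2$ by density at the end) and solves \emph{two} independent elliptic boundary-value problems directly from $Z$: your weighted Neumann problem for $\zeta$, and simultaneously a Dirichlet problem $-\nabla\cdot(\tfrac{1}{a}\nabla\psi)=\curl Z$, $\psi|_{\partial\omega}=0$, for the $\mathcal{U}$-component; the remainder $W=Z+\tfrac{1}{a}\nabla^\perp\psi-\nabla\zeta$ is then checked to lie in $\mathcal{W}$. You instead proceed sequentially---extract $V$, pass to a global stream function for the divergence-free residual, and peel off its $m$ boundary constants to land in $H^1_0$---which makes the topological obstruction very explicit, at the cost of one extra step. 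For the dimension count the paper parametrizes $\mathcal{W}$ by the \emph{fluxes} $\Phi_i=\tfrac{1}{2\pi}\oint_{\partial\omega_i}\tfrac{1}{a}\partial_\nu\xi$ (constructing a basis $\xi_i$ via auxiliary variational problems) rather than by the boundary values $d_i$ as you do; both choices yield $\dim\mathcal{W}=m$, but the flux basis is the one actually invoked later in Step~1 of the recovery-sequence construction.
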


\begin{proof}[]  First, assume  $Z\in C^\infty(\omega;\R^2)$.  
We define $\psi$ and $\zeta$ as the solutions to the boundary-value problems,
$$
\left\{ \begin{aligned}  
    -\nabla\cdot \left({1\over a(x)}\nabla\psi\right) 
       &= \curl Z
    \ \ \text{in $\omega$,} \\
    \psi &= 0 
    \ \ \text{on $\partial\omega$,}
\end{aligned}\right.
\qquad
\left\{ \begin{aligned}  
    \nabla\cdot \left(a(x)\nabla\zeta\right) &= 
      \div [aZ]
    \ \ \text{in $\omega$,} \\
    {\partial\zeta\over\partial\nu} &= Z\cdot\nu 
    \ \ \text{on $\partial\omega$,}
\end{aligned}\right. 
$$
Then, it is easy to verify that 
$W:= Z + \frac1{a}\nabla^\perp\psi - \nabla\zeta$ satisfies $\curl W=0=\div [aW]$ in $\omega$, and $W\cdot\nu =0$ on $\partial\omega$.   Moreover, by integration by parts we see that $W\perp {1\over a}\nabla^\perp\psi\perp \nabla\zeta$ in the inner product $\langle\cdot,\cdot\rangle$.

To identify the space $\mathcal{W}$, we apply Lemma~1.1 of \cite{BBH} and note that any $W\in\mathcal{W}$ may be written as $W={1\over a}\nabla^\perp\xi$ with $\xi$ constant on each component of $\partial\omega$, and $\nabla\cdot{1\over a}\nabla \xi=0$ in $\omega$.  If $\omega$ is simply connected, the maximum principle ensures that $\xi$ is constant in $\omega$, and $\mathcal{W}=\{0\}$ is trivial.  In case $\omega=\omega_0\setminus\cup_{i=1}^m \omega_i$ is multiply connected, we follow the treatment of \cite{AB1}.  For each fixed $i=1,\dots,m$ we define functions $\xi_i\in H^1_0(\omega)$ which solve
\begin{equation}\label{xii}
\left.\begin{gathered}
 \nabla\cdot{1\over a}\nabla \xi_i = 0, \quad\text{in $\omega$,}\\
 \xi_i|_{\partial\omega_j} = c_{ij}, \quad j=1,\dots,m \\
 \xi_i|_{\partial\omega_0}=0, \\
 {1\over 2\pi}\oint_{\partial\omega_j} {1\over a}{\partial\xi_i\over\partial \nu} dx = \delta_{i,j}\quad j=1,\dots,m,
\end{gathered}\right\}
\end{equation}
where $c_{ij}$ are constants (determined by the solutions,) and $\delta_{i,j}$ is Kronecker's delta.
The existence of such $\xi_i$ may be obtained by minimizing
$$  F_i(\xi) = \frac12\int_\omega {1\over a} |\nabla\xi|^2\, dx 
    + 2\pi \xi|_{\omega_i}  $$
over the class of $\xi\in H^1_0(\omega_0)$ with $\xi|_{\omega_j}$ constant.  (See section I.1 of \cite{BBH}.)
It is easy to show that
$$ \xi= \sum_{i=1}^m \Phi_i\xi_i(x), 
\qquad \Phi_i:= 
\left({1\over 2\pi}\oint_{\partial\omega_i} {1\over a}{\partial\xi\over\partial \nu} dx\right) .
$$
Thus, $W={1\over a}\nabla^\perp\xi\in \mathcal{W}$ is parametrized by the $m$ constants 
$\Phi_i$, $i=1,\dots,m$, and  $\mathcal{W}$ is $m$-dimensional.  By elliptic regularity we also have $\mathcal{W}\subset C^1(\omega;\R^2)$.

For general $Z\in L^2(\omega;\R^2)$, the general result is obtained by density. 
\end{proof}

We are now ready to complete the proof of the Gamma convergence result.

\begin{proof}[ of Theorem~\ref{Glimsup}]
Let $j\in\mathcal{Z}$ be given, as well as a sequence $\kappa_n\to \infty$.
We choose vector potentials $A_n=A_{ex}$, and construct a sequence of order parameters $u_n$ of the form $u_n(x)=v_n(x')$ to satisfy the demands of the theorem.  As noted in \cite{AlamaBronsardGS}, for configurations of this form, the three-dimensional energy
reduces,
$$   \tilde I_{\eps_n,\kappa_n}(u_n,A_n) =  G_\kappa(v_n; F_*),  $$
where $G_\kappa$ is defined in \eqref{tfilm} and $B'_*$ is as in \eqref{Bstar}.  Since all which follows will be two-dimensional, we drop the primes in our notation, and write $B_*\in\R^2$, $\nabla=(\partial_{x_1},\partial_{x_2})$, and $v(x), a(x)$ for $x=(x_1,x_2)\in\R^2$.

We next apply the Hodge decomposition above to our given $j\in\mathcal{Z}$, and write
$$   j = U+ V + W = -{1\over a}\nabla^\perp\psi + \nabla\zeta + W, $$
with $\psi\in H^1_0(\omega)$, $\zeta\in H^1(\omega)$ and $W\in\mathcal{W}$, a mutually orthogonal splitting in the inner product 
$\langle\cdot,\cdot\rangle$.  Since $V,W$ are irrotational, they do not contribute to the weak Jacobian $J=\frac12 \nabla\times j$, and carry no vorticity.  As in \cite{JS}, we may associate to $V,W$ an $S^1$-valued map $w^\kappa$.  The singular part of the Jacobian is contained in $U$; for this part we construct a family $u^\kappa$ with point vortices via an appropriate Green's function.  We adapt the arguments of \cite{SSfbp} to deal with the inhomogeneity of the functional $\II$.  Putting these two parts together, the desired recovery sequence will have the form $v_n=w^{\kappa_n}u^{\kappa_n}$.

\paragraph{Step 1: } The components  $V+W\in\mathcal{V}\oplus\mathcal{W}$.

From the proof of Lemma~\ref{hodgelemma}, we may write
$V=\nabla\zeta$, $\zeta\in H^1(\omega)$ and $W={1\over a}\nabla^\perp\xi$ with $\xi(x)=\sum_{i=1}^m \Phi_i\xi_i(x)$, for $\xi_i$ as in \eqref{xii} with $\Phi_i$ real constants.  Let $M_{i,n}=[\Phi_i\ln\kappa_n]$, $i=1,\dots,m$, where brackets denote the integer part, and set
$$  \Xi_n:= \sum_{i=1}^m M_{i,n} \xi_i, \qquad
         W_n= -{1\over a}\nabla^\perp\Xi_n.  $$
We note that 
\begin{equation}\label{West}
\| W_n - W\ln\kappa_n\|_{C^1}\le C,
\end{equation}
for constant $C$ depending on $W$ (but independent of $n$.)

Since 
$$\curl W_n= \sum_{i=1}^m M_{i,n} 
\nabla^\perp\cdot{1\over a}\nabla^\perp\xi_i = 0, \quad\text{and}\quad
\oint_{\partial\omega_j}W_n\cdot\tau\, ds
  = \sum_{i=1}^m M_{i,n} \oint_{\partial\omega_j} 
   {1\over a}{\partial\xi_i\over\partial\nu} ds = 2\pi M_{j,n},
$$
an integer multiple of $2\pi$ for each $j=1,\dots,m$, it follows that $W_n$ is locally a gradient, $W_n=\nabla \eta_n$ for $\eta_n$ possibly multiple valued, but for which $e^{i\eta_n}$ is smooth and single-valued in $\omega$.  We may then define the complex order parameter 
$$
w_n=\exp{i(\eta_n + \zeta\ln\kappa_n)}.
$$
By construction,
\begin{equation}\label{w-conv}
   {j(w_n)\over\log\kappa_n} = 
{(iw_n,\nabla w_n)\over\log\kappa_n} \to V + W
\end{equation}
in $C^1(\bar\omega)$.
Since $|w_n|=1$, we may easily calculate the contribution to the energy using the orthogonality:
\begin{align} \nnn \frac12\int_\omega a(x) 
    |\nabla w_n|^2 \, dx
   &= \frac12 \int_\omega a(x) |\nabla\eta_n + \nabla \zeta\ln\kappa_n|^2\, dx \\
        \nnn
    &= \frac12 \int_\omega a(x) |W_n|^2 +
     \frac{(\ln\kappa_n)^2}2 \int_\omega a(x) |\nabla\zeta|^2\, dx\\
     &\le \frac{(\ln\kappa)^2}2 \int_\omega   a(x)\left\{
       |W|^2 + |V|^2\right\} dx + O(1),
      \label{w-est}
\end{align}
using \eqref{West} in the last line.  This completes Step 1.

\medskip

The treatment of the component $U=-{1\over a}\nabla^\perp\psi\in\mathcal{U}$ will require several steps.  First, we restrict to $\psi\in C_0^\infty(\omega)$; the result for general $\psi\in H^1_0(\omega)$ will follow from a diagonal argument.  Denote by $K\Subset \omega$ the support of $\psi$.

\paragraph{Step 2: }  Approximating the measure $\mu:=\curl U=-\nabla\times{1\over a}\nabla^\perp\psi$ by Dirac masses (representing vortices.)

Let $N_n\in\NN$ be any sequence of whole numbers with
$$  {N_n\over \log\kappa_n}\longrightarrow 1.  $$
Applying Lemma~7.5 of \cite{JS}, there exist families of points 
$\{p_i^n\}_{i=1,\dots,N_n}$ in the set $K=\supp \psi$ and associated integers $\sigma_i^n\in\{-1,1\}$ with the following properties:
\begin{gather}\label{JS1}
|p_i^n - p_j^n| \ge c_0 N_n^{-1/2}\quad \text{for $i\neq j$, for constant $c_0=c_0(\psi)$}; \\
\label{JS4}
\lim_{\alpha\to 0}R(\alpha)=0 \quad
\text{where}\quad R(\alpha)= \limsup_{n\to\infty} 
    \sum_{i\neq j:\atop |p_i^n-p_j^n|\le\alpha}
       {\left|\log |p_i^n - p_j^n|\right| \over N_n^2}, \\
\label{JS2}
\mu_n:= {2\pi\over N_n}\sum_{i=1}^{N_n}
         \sigma^n_i\, \delta_{p_i^n} \wto \mu, \\
\label{JS3}         
|\mu_n|= {2\pi\over N_n}\sum_{i=1}^{N_n}
          \delta_{p_i^n} \wto |\mu|, 
\end{gather}
where the convergence in \eqref{JS2},\eqref{JS3} is weakly in the sense of measures, and strongly in $[C_0^{0,\gamma}]'$ for all $0<\gamma\le 1$.  
By $|\mu|$ we mean the total variation of the measure $\mu=\curl U$.

As in \cite{SSfbp} we modify the measures $\mu_n$ by regularizing the Dirac mass.  Let  $\mu_i^n := \kappa_n\mathcal{H}^1\restr{\partial B(p_i^n,1/\kappa_n)}$, the element of arclength on $S_i^n:=\partial B(p_i^n,1/\kappa_n)$, normalized with mass $2\pi$.  We define the measures
$$
\nu_n  ={1\over N_n} \sum_{i=1}^{N_n} \sigma_i^n\, \mu_i^n,
$$
with
$p_i^n\in K$, $\sigma_i^\kappa\in\{0,1\}$ as above.  Since each 
$\mu_i^n \longrightarrow \delta_{p_i^n}$ strongly in $[C_0^{0,\gamma}(\omega)]'$ for all $0<\gamma\le 1$, and weakly in $\Meas(\omega)$, we may conclude that \eqref{JS2},\eqref{JS3} hold as well for $\nu_n$,
\begin{equation}
\label{JSnu}
\nu_n \longrightarrow \mu, \qquad
|\nu_n|\longrightarrow |\mu|, \quad\text{strongly in $[C_0^{0,\gamma}(\omega)]'$ and weakly in $\Meas(\omega)$}.
\end{equation}
By Fubini's theorem we also note that the product measures also converge,
\begin{equation}\label{Fubini}
\nu_n\otimes \nu_n \longrightarrow \mu\otimes\mu,
\end{equation}
strongly in $[C_0^{0,\gamma}(\omega\times\omega)]'$ and weakly in $\Meas(\omega\times\omega)$.

\paragraph{Step 3:} Recovering $\mathcal{U}$ from $\mu=\curl U$.

We introduce the Dirichlet Green's function,
$G_a(x,y)$ in $\omega$, which solves
$$ \left\{
\begin{gathered}
-\nabla_x\cdot{1\over a(x)}\nabla_x G_a(x,y)= \delta_y(x) , \quad\text{in $\omega$,}\\
G_a(\cdot,y) =0, \quad\text{on $\partial\omega$,}
\end{gathered} \right.
$$
for each fixed $y\in\omega$.  By standard elliptic theory (recall $a>0$ is smooth in $\overline\omega$) we may conclude that $G_a(x,y)$ is smooth in $\overline\omega\times\overline\omega\setminus\{y=x\}$, and
\begin{equation}\label{Green1}
 G_a(x,y) = -\frac{a(x)}{2\pi} \ln |x-y| + \gamma(x,y),  
 \end{equation}
where the regular part $\gamma$ has the property that for every compact set $K\Subset\omega$, there exists $C(K)<\infty$ with
$$  \sup_{y\in K\atop x\in\overline\omega} |\gamma(x,y)|\le C(K).  $$

Given $U\in \mathcal{U}$, we then obtain the potential function $\psi\in H^1_0(\omega)$ from $\curl U=\mu$ by solving
$$
\begin{cases}
-\nabla\cdot{1\over a(x)}\nabla \psi = \mu & \text{ in } \omega, \\
\psi = 0 & \text{ on } \partial \omega,\end{cases}
$$
and we recover $U=-{1\over a}\nabla^\perp\psi$.
Using the Green's function representation, we have
$$
\psi(x) = \int_\omega G_a(x,y) \, d\mu(y).
$$
Since $\mu\in H^{-1}(\omega)$, we may calculate the weighted norm of $U$ in terms of the measure $\mu$ as follows:
\begin{align}\nonumber
\int_\omega a(x)\, |U|^2\, dx &=
\int_\omega \frac{1}{a} \left|\grad \psi\right|^2 \, dx  \\
\nonumber
	& = -\int_\omega \psi \cdot \grad^\perp \left(\frac1a \grad^\perp \psi\right) \, dx  \\
	\nonumber
	& = \int_\omega \psi(x)\, d\mu(x) \\
	& = \int_\omega \int_\omega G_a(x,y) \, d\mu(y) \, d\mu(x).
	\label{s2}
\end{align}

\paragraph{Step 4:}  There exists a sequence $\psi_n\in H^1_0(\omega)$ for which $-{1\over a}\nabla^\perp\psi_n\longrightarrow U$ strongly in $L^p(\omega)$ for all $p<2$, and
\begin{equation}\label{s3}
  \limsup_{n\to\infty}
\int_\omega {1\over a}|\nabla \psi_n|^2\, dx 
    \le 
\int_\omega a(x)\, d|\mu|(x) + \int_\omega a(x) |U|^2\, dx.
\end{equation}

\medskip

For each $n$, we define $\psi_n(x) = \int_\omega G_a(x,y) \, d\nu_n(y)$, and so $\psi_n$ solves
$$
\begin{cases}
-\nabla\cdot{1\over a(x)}\nabla \psi_n = \nu_n & \text{ in } \omega, \\
\psi_n = 0 & \text{ on } \partial \omega.\end{cases}
$$
By \eqref{JSnu} and elliptic regularity, we have $\psi_n\to\psi$ in $W^{1,p}(\omega)$ for all $p<2$, and thus $-{1\over a}\nabla^\perp\psi_n\to U$ in $L^p(\omega)$ for all $p<2$ as claimed.

To estimate the energy we use the Green's representation.
Since $\nu_n\in H^{-1}(\omega)$ for fixed $n$, by \eqref{s2} we conclude that
$$
\int_\omega \frac{1}{a} \left|\grad \psi_n\right|^2 \, dx 
	= \int_\omega \int_\omega G_a(x,y) \, d\nu_n(y) \, d\nu_n(x).
$$
For any $0<\alpha<1$, let $\Delta_\alpha = \{ (x,y) \in \omega\times\omega: |x-y|\leq \alpha\}$.  Fix $\chi_\alpha\in C^\infty(\bar\omega\times\bar\omega)$ with $0\le\chi_\alpha\le 1$, and
$$
\chi_\alpha(x,y)=
\begin{cases}
1, &\text{if $x\in \Delta_\alpha$},\\
0, &\text{if $x\notin \Delta_{2\alpha}$}.
\end{cases}
$$
For any $\alpha\in (0,1)$, $G_a(x,y)(1-\chi_\alpha(x,y))$ is smooth, and hence by the strong $[C_0^{0,\gamma}]'$ convergence $\nu_n\to\mu$ we have:
\begin{equation}\label{s3a}
\lim_{n\to\infty} \int_\omega\int_\omega G_a(x,y)(1-\chi_\alpha(x,y))
 d\nu_n(y)\, d\nu_n(x) =
   \int_\omega\int_\omega G_a(x,y)(1-\chi_\alpha(x,y))
 d\mu(y)\, d\mu(x) .
\end{equation}

For the complementary integral, we use \eqref{Green1} to observe that
\begin{align}\nonumber
\int_\omega\int_\omega G_a(x,y)\chi_\alpha(x,y)
 d\nu_n(y)\, d\nu_n(x) &=
  \int_K\int_{\Delta_{2\alpha}} 
  \left[ {a(x)\over 2\pi} \log{1\over |x-y|} + \gamma(x,y)\right] \chi_\alpha\,d\nu_n(y)\, d\nu_n(x) \\
  \nonumber
  &\le \int_K\int_{\Delta_{2\alpha}} {a(x)\over 2\pi} \log{1\over |x-y|}
   \,d\nu_n(y)\, d\nu_n(x) + C\alpha \\
   \label{s3b}
 &={1\over N^2_n} \sum_{i,j=1}^{N_n} 
 \iint_{\Delta_{2\alpha}} {a(x)\over 2\pi} \log{1\over |x-y|}
   \,d\mu_i^n(y)\, d\mu_i^n(x) + C\alpha.
\end{align}

To evaluate the remaining integral, we consider the contribution due to distinct points $p_i^n\neq p_j^n$ in $\Delta_{2\alpha}$ separately. We adapt an argument in Proposition~7.4 of \cite{SS}.  
Define the index set 
$$\mathcal{J}_n=\{(i,j): \ |p_i^n-p_j^n|\le 2\alpha\}.  $$
Let $R_n=\frac14 c_0 N_n^{-1/2}$, where $c_0=c_0(\psi)$ is the constant in \eqref{JS1}.  We also define balls $\tilde B_i^n=B(p_i^n, R_n)$, $i=1,\dots,N_n$.  By the choice of $R_n$, they are disjoint, as is the union
$$  \bigcup_{(i,j)\in\mathcal{J}_n} \left(\tilde B_i\times\tilde B_j \right)
   \subset \Delta_{3\alpha}.
$$
We also observe that for any $R\le R_n$ and $(i,j)\in\mathcal{J}_n$, since $R\le \frac14|p_i-p_j|$, we have
\begin{equation}\label{s3f}
\frac12 \le {|x-y|\over |p^n_i-p^n_j|} \le \frac32 \quad
  \text{for all $x\in B(p_i^n,R)$, $y\in B(p_j^n,R)$}.
\end{equation} 

For $(i,j)\in\mathcal{J}_n$ we then have (recalling that $S_n^i=\partial B(p_i^n,{1\over\kappa_n})=\supp\mu_i^n$,)
\begin{align*}
\iint_{\tilde B_i^n\times B_j^n} \log{3\over |x-y|} dx\, dy
&\ge  
  \iint_{\tilde B_i^n\times B_j^n} \log{2\over |p^n_i-p^n_j|} dx\, dy
  \\
 & = \pi^2 R_n^4  \log{2\over |p^n_i-p^n_j|}  \\
 & = {R_n^4\over 4} \iint_{\tilde S_i^n\times S_j^n}
    \log{2\over |p^n_i-p^n_j|} d\mu_i^n(x) \, d\mu_j^n(y) \\
 &\ge {R_n^4\over 4} \iint_{\tilde S_i^n\times S_j^n}
    \log{1\over |x-y|} d\mu_i^n(x) \, d\mu_j^n(y),
\end{align*}
using \eqref{s3f} in the first and last lines.  Summing over all pairs $(i,j)\in\mathcal{J}_n$, and using the disjointness of the union of the $\tilde B_i^n\times \tilde B_j^n$, we obtain:
\begin{align} \nonumber
{1\over N_n^2} \sum_{(i,j)\in\mathcal{J}_n} \iint_{S_i^n\times S_j^n}
     {a(x)\over 2\pi} \log{1\over |x-y|} d\mu_i^n(x) \, d\mu_j^n(y)
  &\le {C\over R_n^4\, N_n^2} \sum_{(i,j)\in\mathcal{J}_n}
     \iint_{\tilde B_i^n\times B_j^n} \log{3\over |x-y|} dx\, dy
     \\
  & \le C \iint_{\Delta_{3\alpha}} \log{3\over |x-y|} dx\, dy =: \mathcal{R}(\alpha).
  \label{s3d}
\end{align}
As $|\log|x-y||$ is integrable, the remainder $\mathcal{R}(\alpha)\to 0$ as $\alpha\to 0$, and so this term will not contribute to the limiting energy.

Finally, we consider the contribution from the self-energy of the vortices $p_i^n$.
We parametrize the integrals over 
$S_i^n=\partial B(p_i^n,{1\over\kappa_n})$ using complex notation, that is we
write $x,y\in \partial B(p_i^n,{1\over\kappa_n})$ as 
$x=p_i^n+{1\over\kappa_n}e^{i\theta}$, $y=p_i^n+{1\over\kappa_n}e^{i\tau}$, $0\le\theta,\tau<2\pi$.  Then we have:
\begin{align*}
\nonumber
{1\over N^2_n}
\iint_{\omega}
    {a(x)\over 2\pi} \log{1\over |x-y|}\,d\mu_i^n(y)\, d\mu_i^n(x)
&= {1\over N^2_n}\int_0^{2\pi}\int_0^{2\pi}
    {a\left(p_i^n+ {e^{i\theta}\over\kappa_n}\right)\over 2\pi}
     \left[ \log\kappa_n + \log\left| e^{i(\theta-\tau)}-1\right|\right]
      d\theta\,d\tau\\
 \nonumber
&=   {1\over N_n} \int_0^{2\pi}
        a\left(p_i^n+ {e^{i\theta}\over\kappa_n}\right)\, d\theta + O(N_n^{-2}) \\
&=    {1\over N_n} \int_\omega a(x)\, d|\mu_i^n|(x) + O(N_n^{-2}).
\end{align*}
Summing over all $i=1,\dots,N_n$, we arrive at
\begin{align}\nonumber
  {1\over N^2_n}\sum_{i=1}^{N_n} 
\iint_{\omega}
    {a(x)\over 2\pi} \log{1\over |x-y|}\,d\mu_i^n(y)\, d\mu_i^n(x)
    &= {1\over N_n} \int_\omega a(x)\, d|\nu_n|(x) + O(N_n^{-1})
    \\
    \label{s3c}
   &= \int_\omega a(x)\, d|\mu|(x) + O(N_n^{-1}).
\end{align}
Passing to the limit $\kappa_n\to\infty$, we thus obtain from \eqref{s3a},\eqref{s3b},\eqref{s3d}, and \eqref{s3c}, that
\begin{multline*}  \limsup_{n\to\infty}
   \int_\omega\int_\omega G_a(x,y) d\nu_n(y)\, d\nu_n(x)
    \\
        \le 
    \int_\omega a(x)\, d|\mu|(x) + \int_\omega\int_\omega G_a(x,y)(1-\chi_\alpha(x,y))
 d\mu(y)\, d\mu(x) + C\alpha + C\mathcal{R}(\alpha).
\end{multline*}
By hypothesis, the measure $\mu$ is bounded and absolutely continuous, and so we may apply dominated convergence to pass to the limit $\alpha\to 0$ and obtain the desired bound \eqref{s3}, as
\begin{align*}  \limsup_{n\to\infty}
\int_\omega {1\over a}|\nabla \psi_n|^2\, dx &=
\limsup_{n\to\infty}
   \int_\omega\int_\omega G_a(x,y) d\nu_n(y)\, d\nu_n(x) \\
    &\le 
\int_\omega a(x)\, d|\mu|(x) + \int_\omega\int_\omega 
   G_a(x,y) d\mu(y)\, d\mu(x)\\
   &= \int_\omega a(x)\, d|\mu|(x) + \int_\omega a(x)\, |U|^2\, dx,
\end{align*}
by \eqref{s2}.

\paragraph{Step 5: } Construction of a sequence $u_n\in H^1_0(\omega;\mathbb{C})$.

Let $U_n=-N_n{1\over a}\nabla^\perp \psi_n$.  Then, 
$\nabla^\perp U_n = N_n \, \grad \cdot \big(\frac1a \grad\psi_n\big) =0$ locally in $\omega\setminus \cup_i^{N_n}B(p_i^n,{1\over\kappa_n})$.  Moreover, if $C$ is a simple closed curve in $\omega\setminus \cup_i^{N_n}B(p_i^n,{1\over\kappa_n})$, we have
$$  \int_C U_n\cdot \tau\, ds  \in 2\pi\,\mathbb{Z},  $$
by the normalization $|d\mu_i^n|=2\pi$.  Thus, we may write $U_n=\nabla\phi_n$ in $\omega\setminus \cup_i^{N_n}B(p_i^n,{1\over\kappa_n})$, with $\phi_n$ which is multiple valued, but for which $\nabla\phi_n$ and $e^{i\phi_n}$ are single-valued in $\omega\setminus \cup_i^{N_n}B(p_i^n,{1\over\kappa_n})$.  

To remove the singularity at each vortex core we define, 
$$
\rho_i^n(x) := \begin{cases}
0 & \text{ if } |x-p_i^n| < \frac1{2\kappa_n}, \\
2\kappa_n |x-p_i^n|-1 & \text{ if } \frac1{2\kappa_n} \leq |x-p_i^n| \leq \frac1{\kappa_n}, \\
1 & \text{ if } |x-p_i^n| >\frac1{\kappa_n},
\end{cases}
$$
and $\rho_n := \ds \prod_{i=1}^{N_n} \rho_i^n$.  A simple computation shows that
$$ \int_\omega a(x)\left\{ \frac12 |\nabla\rho_i^n|^2
  +{\kappa^2_n\over 4} ((\rho_i^n)^2-1)^2)\right\} dx
    \le C_0,
$$ 
with constant $C_0$ independent of $n$.

Now define
$u_n = \rho_n e^{i\phi_n}$, with $\rho_n$, $\phi_n$ as in the preceding paragraphs.  We then have:
\begin{align*}
\int_\omega a(x)\left\{ \frac12|\grad u_n|^2 
  + \frac{\kappa_n^2}{4}\big(|u_n|^2-1\big)^2\right\} dx
	& = \int_\omega a(x)\left\{ \frac12\rho_n^2 |\grad \phi_n|^2  
	+ \frac12|\grad \rho_n|^2 
	+ \frac{\kappa^2}{4}\big(\rho_n^2-1\big)^2\right\} dx \\
	& \leq \frac{N_n^2}2\int_\omega \frac{1}{a(x)} |\grad \psi^\kappa|^2 \,dx + C_0 N_n.
\end{align*}
From \eqref{s3} we then conclude that 
\begin{equation}\label{u-en}
\limsup_{n\to\infty} {1\over (\ln\kappa_n)^2}
   \int_\omega a(x)\left\{ \frac12|\grad u_n|^2 
  + \frac{\kappa_n^2}{4}\big(|u_n|^2-1\big)^2\right\} dx
\le
\frac12 \int_\omega a(x)\, d|\mu|(x) + \frac12 \int_\omega a(x) |U|^2\, dx.
\end{equation}
Since $(\rho_n^2-1)\to 0$ in $L^q$ for all $q<\infty$, we also conclude that
\begin{equation}\label{u-conv}
  {j(u_n)\over N_n} = -{1\over a}\nabla^\perp\psi_n
     + {(1-\rho_n^2)\over a}\nabla^\perp\psi_n \longrightarrow 
        U \quad\text{in $L^p(\omega)$ for all $p<2$.}
\end{equation}

\paragraph{Step 6: } Putting it all together.

This follows as in \cite{JS}; we provide details for completeness.
Write $j\in\mathcal{Z}$ as $j=U+\tilde W$ with $U\in\mathcal{U}$ and $\tilde W= V+W$, $V\in\mathcal{V}$, $W\in\mathcal{W}$.
Let $w_n$ be as defined in Step~1 and $u_n$ as constructed in Step~5, and define $v_n=u_n\,w_n$.
Since $|w_n|=1$, we have
\begin{equation}\label{jconv}
  j(v_n)= j(u_n) + \rho_n^2 j(w_n) \longrightarrow U + \tilde W = j
\end{equation}
in $L^p(\omega)$ for all $p<2$.

To estimate the energy, we again use the fact that $|w_n|=1$ to expand:
$$
{1\over N_n^2}\int_\omega a(x)|\nabla v_n|^2\, dx
  = {1\over N_n^2}\int_\omega a(x) \left\{ 
  |\nabla u_n|^2 + \rho_n^2|\nabla w_n|^2 +  j(u_n)\cdot j(w_n)\right\} dx.
$$
We claim that the last term is negligible. Indeed, from Step~1, ${j(w_n)\over\log\kappa_n}=\nabla\Phi_n$, with $\nabla\Phi_n\to \tilde W$ in $C^1$, and therefore,
\begin{align*}
  {1\over N_n^2}\int_\omega a(x) j(u_n)\cdot j(w_n)\, dx
&= -\int_\omega \nabla^\perp\psi_n\cdot \rho_n^2\nabla\Phi_n\, dx \\
&= -\int_\omega \left[\nabla^\perp\psi_n\cdot \nabla\Phi_n -
  (1-\rho_n^2) \nabla^\perp\psi_n\cdot \nabla\Phi_n\right] dx \\
&= \int_\omega (1-\rho_n^2) \nabla^\perp\psi_n\cdot \nabla\Phi_n \, dx
\longrightarrow 0.
\end{align*}
We calculate,
\begin{align*}
&\limsup_{n\to\infty} {1\over N_n^2} G_\kappa(v_n;F_*) \\
&\qquad =\limsup_{n\to\infty} {1\over N_n^2}
 \int_\omega a(x)\left\{ \frac12 |\nabla u_n|^2 + \frac12 |\nabla w_n|^2 - B_*\cdot j(v_n) + |B_*|^2|v_n|^2 
 + {\kappa_n^2\over 4}(|u_n|^2-1)^2 + {a(x)^2|H'|^2\over 24} \rho_n^2\right\}  \\
&\qquad\le \frac12 \int_\omega a(x) \, d|\mu| 
+ \frac12\int_\omega a(x)\left(|U|^2 + |\tilde W|^2 - B_*\cdot j + |B_*|^2 + {a(x)^2|H'|^2\over 12}\right)\, dx \\
&\qquad = \frac12 \int_\omega a(x) \, d|\mu| 
+ \frac12\int_\omega a(x)\left( |j-B_*|^2 + {a(x)^2|H'|^2\over 12}\right)\, dx \\
&\qquad = I_\infty(j; F_*) + \int_\omega {a(x)^3|H'|^2\over 24} dx,
\end{align*}
with $F_*=\curl B_*$,
where we have used \eqref{u-en}, \eqref{w-est}, and \eqref{jconv}.
The completes the proof of the Gamma convergence result. 
\end{proof}

\section{The Obstacle Problem}

We now examine the Gamma limit and its minimizers.  Since the limiting functional is two-dimensional, we simplify notation somewhat, and denote by $x=(x_1,x_2)\in \R^2$, and use $j$, $B\in\R^2$, and $F=\nabla\times B$ dropping the primes and the asterisks.
We also associate to the 1-form $j$ its representation as a vector field $j=(j_1,j_2)\in\R^2$, and write the Jacobian as a scalar measure,
$J=\frac12\nabla\times j=\frac12[\partial_{x_1}j_2-\partial_{x_2}j_1]$.

Using convex duality, we may identify the minimizers of the limiting energy 
$$ I_\lambda(j; F) = \frac12\left\|a(x)\, \nabla \times j\right\|_{\Meas(\omega)}
   + \frac12 \int_\omega a(x)|j- B|^2 \, dx  $$
 (obtained in Theorem~\ref{Glimsup}) as solutions of a {\em two-obstacle} problem for Poisson's equation.  The first step is to rewrite the minimization problem for $I_\infty$ in terms of a scalar potential function.
 
\begin{lemma}\label{lem:zeta}
The minimizer of $I_\infty$ over $\mathcal{Z}$ is attained at 
$j=  B - {1\over a} \nabla^\perp\zeta$, where $\zeta\in H^1_0(\omega)$ minimizes the functional
$$  E_\infty(\zeta;F)= \begin{cases}
\frac12 \left\|a(x)\left( \nabla\cdot{1\over a}\nabla\zeta 
   +  F\right)\right\|_{\Meas(\omega)} + \frac12 \int_\omega {1\over a(x)}|\nabla\zeta|^2 \, dx,
&\text{if $\nabla\cdot{1\over a}\nabla\zeta 
 +  F\in\Meas(\omega),$}\\
+\infty, &\text{otherwise},
\end{cases}
$$
for $F=\curl B$.
\end{lemma}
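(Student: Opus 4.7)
The plan is to reduce the constrained minimization of $I_\infty$ over $\mathcal{Z}$ to a scalar minimization over $H^1_0(\omega)$ via the weighted Hodge decomposition of Lemma~\ref{hodgelemma}. Since $j\in\mathcal{Z}$ lies in $L^2$ and $B=B_*$ is bounded on $\omega$, the difference $j-B\in L^2(\omega;\R^2)$ admits a unique decomposition
$$
j - B \;=\; -\tfrac{1}{a}\nabla^\perp\psi \;+\; \nabla\chi \;+\; W,
$$
with $\psi\in H^1_0(\omega)$, $\nabla\chi\in\mathcal{V}$, $W\in\mathcal{W}$, mutually orthogonal in the inner product $\langle u,v\rangle=\int_\omega a\,u\cdot v\,dx$.

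The two ingredients of $I_\infty$ then separate cleanly along this splitting. On the one hand, since $\nabla\times\nabla\chi=0$ and $W$ is curl-free by the defining property $\nabla^\perp\!\cdot W=0$ of $\mathcal{W}$, only the $\mathcal{U}$-component contributes to the vorticity:
$$
\nabla\times j \;=\; F \;-\; \nabla\cdot\!\bigl(\tfrac{1}{a}\nabla\psi\bigr)
$$
in $\mathcal{D}'(\omega)$, and this is a Radon measure precisely because $j\in\mathcal{Z}$. On the other hand, weighted orthogonality gives
$$
\int_\omega a\,|j-B|^2\,dx
\;=\; \int_\omega \tfrac{1}{a}|\nabla\psi|^2\,dx \;+\; \int_\omega a|\nabla\chi|^2\,dx \;+\; \int_\omega a|W|^2\,dx.
$$
Combining these two displays (and using $\|\mu\|_{\Meas}=\|-\mu\|_{\Meas}$ to match the sign convention $\zeta=\psi$ in the stated $E_\infty$), one gets the exact identity
$$
I_\infty(j;F) \;=\; E_\infty(\psi;F) \;+\; \tfrac12\!\int_\omega a|\nabla\chi|^2\,dx \;+\; \tfrac12\!\int_\omega a|W|^2\,dx.
$$

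The final two integrals are nonnegative and vanish iff $\nabla\chi\equiv 0$ and $W\equiv 0$, so any minimizer of $I_\infty$ over $\mathcal{Z}$ necessarily has trivial $\mathcal{V}$- and $\mathcal{W}$-components, giving $j=B-\tfrac{1}{a}\nabla^\perp\zeta$ with $\zeta:=\psi\in H^1_0(\omega)$, and moreover $I_\infty(j;F)=E_\infty(\zeta;F)$. Conversely, for any admissible $\zeta\in H^1_0(\omega)$ with $\nabla\cdot\tfrac{1}{a}\nabla\zeta+F\in\Meas(\omega)$, the field $j=B-\tfrac{1}{a}\nabla^\perp\zeta$ lies in $\mathcal{Z}$ and realizes the value $E_\infty(\zeta;F)$, so the two variational problems have the same value and the same minimizers. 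The main technical point to verify is that the curl identity above holds at the level of measures (not merely distributions) under the hypothesis $j\in\mathcal{Z}$, which is immediate once one knows $\nabla\chi$ and $W$ are distributionally curl-free; everything else is bookkeeping with the orthogonal decomposition.
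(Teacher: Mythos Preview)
Your argument is correct and is essentially the same as the paper's: both apply the weighted Hodge decomposition of Lemma~\ref{hodgelemma} to $j-B$, observe that the curl term sees only the $\mathcal{U}$-component while the weighted $L^2$ term splits orthogonally, and conclude that the $\mathcal{V}$- and $\mathcal{W}$-components must vanish at a minimizer. Your write-up is in fact a bit more explicit than the paper's about the curl computation, the sign convention, and the converse direction, but the underlying idea is identical.
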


We observe that the Jacobian corresponding to the minimizer is given by:
\begin{equation}\label{limjac}
  J= \frac12\left(\nabla\cdot{1\over a}\nabla\zeta 
 +  F\right) .
\end{equation}

\begin{proof}[] 
Both $I_\infty$, $E_\infty$ are convex and lower semicontinuous, so each has a minimizer.  Let $j\in \mathcal{Z}$.  We apply the  Hodge decomposition from Lemma~\ref{hodgelemma} to  $j-B$, to obtain 
$$   j-B = {1\over a}\nabla^\perp\zeta + \nabla\eta + W, $$
orthogonal with respect to the inner product $\langle\cdot,\cdot\rangle$, with $\zeta\in H^1_0(\omega)$, $\eta\in H^1(\omega)$, and $W\in \mathcal{H}$.  By the orthogonality of the decomposition,
$$   I_\infty(j;F) = E_\infty(\zeta)  
+  \frac12 \int_\omega a(x)\left\{ |\nabla \eta|^2 + |W|^2\right\}\, dx. 
$$
In particular, $j$ minimizes $I_\infty$ if and only if the associated $\zeta$ minimizes $E_\infty$, and both $\eta, W\equiv 0$.  
\end{proof}

Note that for minimizers we conclude that $\div (a(x)(j-B))=0$ in $\omega$, and $(j-B)\cdot\nu=0$ on $\partial\omega$.  These two conditions may also be obtained from the Ginzburg--Landau equations by passing to the thin-film limit for minimizers of $I_{\eps,\kappa}$.

\begin{proposition}\label{dual}
Any minimizer $\zeta\in H^1_0(\omega)$ of $E_\infty$ is also a minimizer of 
\begin{equation}\label{obprob2}
\min_{u\in H^1_0(\omega) \atop |u|\le a(x)/2} \mathcal{E}_\infty(u), \quad\text{with}\quad
 \mathcal{E}_\infty(u):=
   \int_\omega \left[{1\over a(x)}   |\nabla u|^2 - F\, u\right].
\end{equation}
\end{proposition}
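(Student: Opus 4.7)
The strategy is convex duality: we dualize the total-variation term in $E_\infty$, rewrite the problem as a sup over the obstacle class $K := \{u \in H^1_0(\omega) : |u|\le a/2\}$, and identify minimizers of $E_\infty$ with the (unique) minimizer of $\mathcal{E}_\infty$ on $K$ via a saddle-point pinch.

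The first step is the dual representation. Using the total-variation characterization $\|a\nu\|_{\Meas}=\sup_{\phi\in C_c(\omega),\,|\phi|\le 1}\int \phi\cdot a\,d\nu$ with test function $\phi=2u/a$, so that $u$ ranges over $C_c(\omega)\cap K$, and integrating by parts distributionally (valid since $u\in L^\infty\cap H^1_0$ and $\tfrac{1}{a}\nabla\zeta\in L^2$), we obtain, after adding $\tfrac12\int\tfrac{1}{a}|\nabla\zeta|^2$ and completing the square in $\nabla(\zeta-u)$,
$$E_\infty(\zeta;F)=\sup_{u\in K}\Bigl\{-\tfrac12\mathcal{E}_\infty(u)+\tfrac12\int_\omega\tfrac{1}{a}|\nabla(\zeta-u)|^2\,dx\Bigr\},$$
valid for $\zeta$ in the domain of $E_\infty$. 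The extension from $C_c$ test functions to $u\in K$ is handled by a mollification-plus-shrinkage density argument: for $u\in K$, approximate by $u_n=\rho_n\ast((1-\delta_n)u)$ with $\rho_n$ a mollifier and $\delta_n\to 0$; the shrinkage factor $(1-\delta_n)$ preserves the pointwise constraint $|u_n|\le a/2$ thanks to smoothness and strict positivity of $a$ on $\overline\omega$.

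Let $u_*$ be the unique minimizer of $\mathcal{E}_\infty$ over $K$, whose existence and uniqueness follow from strict convexity and coercivity. From the variational inequality $\mathcal{E}_\infty'(u_*)(u-u_*)\ge 0$ for all $u\in K$ one verifies that $-2\nabla\cdot\tfrac{1}{a}\nabla u_*-F$ is a signed Radon measure supported on the contact sets $\{|u_*|=a/2\}$, so $u_*$ lies in the domain of $E_\infty$. The same VI together with the quadratic identity
$$\mathcal{E}_\infty(u)-\mathcal{E}_\infty(u_*)=\mathcal{E}_\infty'(u_*)(u-u_*)+\int_\omega\tfrac{1}{a}|\nabla(u-u_*)|^2\,dx$$
shows that the sup in the duality identity, evaluated at $\zeta=u_*$, is attained at $u=u_*$, yielding $E_\infty(u_*;F)=-\tfrac12\mathcal{E}_\infty(u_*)$.

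Now let $\zeta_*$ be any minimizer of $E_\infty$. Then $E_\infty(\zeta_*)\le E_\infty(u_*)=-\tfrac12\mathcal{E}_\infty(u_*)$, while the duality identity with $\zeta=\zeta_*$ and $u=u_*$ gives $E_\infty(\zeta_*)\ge -\tfrac12\mathcal{E}_\infty(u_*)+\tfrac12\int\tfrac{1}{a}|\nabla(\zeta_*-u_*)|^2\,dx$. The sandwich forces $\int\tfrac{1}{a}|\nabla(\zeta_*-u_*)|^2=0$, and since $\zeta_*,u_*\in H^1_0(\omega)$, we conclude $\zeta_*=u_*$. Thus $\zeta_*\in K$ and $\zeta_*$ minimizes $\mathcal{E}_\infty$ on $K$, as claimed. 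The main technical obstacle is the pair of auxiliary facts needed for the chain of inequalities to close: the density step that enlarges the TV duality to test functions $u\in K$, and the structural claim that $u_*$ belongs to the domain of $E_\infty$. Both rest on standard arguments for two-sided obstacle problems with smooth coefficients, but they have to be explicitly verified here because the Poisson-type operator $\nabla\cdot\tfrac{1}{a}\nabla$ is inhomogeneous.
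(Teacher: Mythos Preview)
Your argument is correct, and it rests on the same convex-duality principle as the paper's proof, but the execution is genuinely different. The paper writes $E_\infty=\Psi+\Phi$ with $\Psi(\zeta)=\tfrac12\int_\omega \tfrac{1}{a}|\nabla\zeta|^2$ and $\Phi$ the total-variation term, computes the Legendre transforms $\Psi^*=\Psi$ and $\Phi^*(u)=\int_\omega uF$ on $\{|u|\le a/2\}$ (else $+\infty$), and then invokes the Fenchel--Rockafellar theorem as a black box to conclude both that the minimum values match and that the minimizers coincide. Your approach instead unpacks this duality by hand: you keep the ``defect'' term $\tfrac12\int_\omega\tfrac{1}{a}|\nabla(\zeta-u)|^2$ explicit in the identity $E_\infty(\zeta)=\sup_{u\in K}\{-\tfrac12\mathcal{E}_\infty(u)+\tfrac12\int\tfrac{1}{a}|\nabla(\zeta-u)|^2\}$, verify separately that the obstacle minimizer $u_*$ lies in the domain of $E_\infty$, and then use a sandwich to force $\zeta_*=u_*$. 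This is more elementary and more explicit about \emph{why} the minimizers coincide (indeed you get uniqueness of the $E_\infty$ minimizer for free), at the cost of having to check the two auxiliary facts you flag---the density of $C_c\cap K$ in $K$ and the measure regularity of $-\nabla\cdot\tfrac{1}{a}\nabla u_*-F$---which the paper sweeps into the abstract duality framework. Both proofs ultimately rely on the same TV-duality computation for $\Phi^*$.
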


\begin{proof}[]
We write $E_\infty(\zeta)= \Psi(\zeta)+\Phi(\zeta)$, with 
$$ \Psi(\zeta):= \frac12\int_\omega {1\over a}|\nabla\zeta|^2, \quad \Phi(\zeta):= \frac12 \left\|a(x)\left( \nabla\cdot{1\over a}\nabla\zeta 
   +  F\right)\right\|_{\Meas(\omega)}.
$$   
We then calculate the Legendre transform (conjugate function) of each, with respect to the norm 
$\|u\|= \sqrt{\int_\omega {1\over a} |\nabla u|^2 }$ on $H^1_0(\omega)$.  Clearly, $\Psi^*(u)=\Psi(u)=\frac12 \|u\|^2$.  For $\Phi$, we have:
\begin{align*}
\Phi^*(u) &= \sup_{\zeta\in H^1_0(\omega)}
    \left[  \int_\omega {1\over a} \nabla u\cdot\nabla\zeta
     - \frac12 \left\|a(x)\left( \nabla\cdot{1\over a}\nabla\zeta 
   +  F\right)\right\|_{\Meas(\omega)} \right] \\
   &= \sup_{\zeta\in H^1_0(\omega)}
    \left[  \int_\omega u\left(  -F - \nabla\cdot{1\over a}\nabla\zeta\right)
     - \frac12 \left\|a(x)\left( \nabla\cdot{1\over a}\nabla\zeta 
   +  F\right)\right\|_{\Meas(\omega)} \right] 
      + \int_\omega uF \\
   &= \sup_{v\in H^{-1}\cap\Meas} \left[ \int_\omega uv
     -\frac12 \|av\|_\Meas \right] 
       + \int_\omega uF.
\end{align*}
If $\|u/a\|_\infty\le\frac12$, the bracketed expression above is non-positive for any $v$, and so the supremum is achieved at $v=0$.  On the other hand, for $\|u/a\|_\infty>\frac12$, the bracketed expression is unbounded above.  Thus, we conclude that
$$  \Phi^*(u) = \begin{cases}
   \int_\omega uF, 
        &\text{if $\left\|{u\over a}\right\|_\infty\le\frac12$,} \\
     +\infty, &\text{otherwise.}
\end{cases}
$$
By the Fenchel--Rockefeller Theorem (see \cite{EkeTem} or \cite{Brezis} for instance,) 
$$\min_{H^1_0(\omega)} E_\infty 
   = -\min_{u\in H^1_0(\omega)}  (\Psi^*(u) + \Phi^*(-u))
   = -\min_{u\in H^1_0(\omega) \atop |u|\le a(x)/2} 
   \int_\omega \left[{1\over a(x)}   |\nabla u|^2 - F\, u\right], $$
and the minimizers coincide.  
\end{proof}

The minimization problem \eqref{obprob2} is the two-obstacle problem, 
and minimizers $u\in K$, for 
$$ K:=\left\{u\in H^1_0(\omega): \ |u(x)|\le {a(x)\over 2}\right\}$$
 solve the variational inequality
\begin{equation}\label{varineq}
\int_\omega \left[ \nabla u \cdot \nabla (v-u) - F\,(v-u)\right] \ge 0,
\quad\text{for all $v\in K$.}
\end{equation}
Minimizers solve the Euler-Lagrange equation away from the coincidence sets, where the pointwise constraint is attained, along free boundaries.
By Theorem~3.2 in Chapter 1 of  \cite{Friedman}, there is a unique, regular minimizer to problem \eqref{obprob2}:

\begin{proposition}\label{obstacle}
Let $a\in C^2(\overline{\omega})$ and $F\in C^\alpha(\overline{\omega})$, for some $\alpha>0$.  There exists a unique $\zeta\in H^1_0(\omega)$  which minimizes $E_\infty$. Moreover, $\zeta \in W^{2,q}(\Omega)$ for all $q<\infty$, and solves
\begin{equation}\label{freebdry}
\begin{cases}
\nabla\cdot {1\over a}\nabla \zeta + F=0 & \text{ on } 
\{-{a(x)\over 2}<\zeta <{a(x)\over 2}\} \\
\nabla\cdot {1\over a}\nabla \zeta + F \ge 0 & \text{ on } \{\zeta >-{a(x)\over 2}\} \\
\nabla\cdot {1\over a}\nabla \zeta + F\le 0 & \text{ on } \{\zeta< {a(x)\over 2}\}
\end{cases}
\end{equation}
\end{proposition}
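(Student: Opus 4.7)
The plan is to apply the direct method to $\mathcal{E}_\infty$ on the closed convex set $K=\{u\in H^1_0(\omega):|u|\le a(x)/2\}$ to obtain the unique minimizer $\zeta$, derive the free boundary system \eqref{freebdry} from the associated variational inequality, and then invoke Lewy-Stampacchia style regularity for the two-obstacle problem (following Theorem~3.2 of Chapter~1 in \cite{Friedman}) to upgrade to $W^{2,q}$.

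First, since $a\in C^2(\overline\omega)$ is strictly positive on $\overline\omega$, the weight $1/a$ is bounded between two positive constants, so $\mathcal{E}_\infty$ is strictly convex, weakly lower semicontinuous, and coercive on $H^1_0(\omega)$ (the last via Poincar\'e's inequality). The set $K$ is nonempty (it contains $0$), closed and convex in $H^1_0(\omega)$, so the direct method yields a unique minimizer $\zeta\in K$. For the Euler-Lagrange system, testing minimality against $v=\zeta+t(w-\zeta)$ with $w\in K$ and $t\in[0,1]$ gives the variational inequality \eqref{varineq}. Choosing perturbations $\varphi\in C_0^\infty(\omega)$ supported in the noncoincidence set $\{-a/2<\zeta<a/2\}$, where both $\pm\varphi$ are admissible for small $t$, yields $\nabla\cdot\frac{1}{a}\nabla\zeta+F=0$ there; one-sided perturbations $\varphi\ge 0$ supported in $\{\zeta<a/2\}$ (respectively $\{\zeta>-a/2\}$) give the two sign inequalities in \eqref{freebdry}.

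The main obstacle will be the $W^{2,q}$ regularity. Setting $L\zeta:=\nabla\cdot\frac{1}{a}\nabla\zeta$, I would establish a Lewy-Stampacchia bracketing of the form
$$
\min\bigl\{-F,\,-L(\tfrac{a}{2})\bigr\}\le L\zeta\le \max\bigl\{-F,\,-L(-\tfrac{a}{2})\bigr\}\quad\text{a.e.\ in }\omega.
$$
Since $a\in C^2(\overline\omega)$ gives $L(\pm a/2)\in L^\infty(\omega)$ and $F\in C^\alpha(\overline\omega)\subset L^\infty(\omega)$, this forces $L\zeta\in L^\infty(\omega)$, whence standard $W^{2,q}$ theory for the uniformly elliptic divergence-form operator $L$ (with $C^1$ coefficients $1/a$) yields $\zeta\in W^{2,q}(\omega)$ for all $q<\infty$. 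The delicate step is the bracketing itself; the cleanest route is to approximate by the penalized problem with functional
$$
\int_\omega\Bigl[\tfrac{1}{a}|\nabla u|^2-Fu\Bigr]+\tfrac{1}{\eps}\int_\omega\bigl[(u-\tfrac{a}{2})_+^2+(-\tfrac{a}{2}-u)_+^2\bigr],
$$
whose smooth minimizer $\zeta_\eps$ satisfies an unconstrained Euler equation; testing that equation against $(L\zeta_\eps+F)_\pm$ produces uniform $L^\infty$ bounds on $L\zeta_\eps$, and passing $\eps\to 0$ preserves them. This adapts without change from the classical case $L=-\Delta$ because $1/a$ is smooth and bounded below.
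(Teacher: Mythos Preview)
Your proposal is correct and follows exactly the route the paper takes: the paper gives no argument of its own but simply cites Theorem~3.2 in Chapter~1 of \cite{Friedman} for the two-obstacle problem, and you have spelled out the standard ingredients (direct method on the convex set $K$, variational inequality, Lewy--Stampacchia bracketing via penalization, then $W^{2,q}$ elliptic regularity) that make up that reference. One small slip: your displayed bracketing has the obstacle terms swapped and carries stray minus signs---the correct form is $\min\{-F,\,L(-a/2)\}\le L\zeta\le\max\{-F,\,L(a/2)\}$---but this does not affect the conclusion $L\zeta\in L^\infty$, and your penalization argument would in any case yield the correct bounds.
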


By the regularity (and the Sobolev embedding) $\zeta\in C^1(\overline{\omega})$, and thus $\zeta$ satisfies both a Dirichlet and Neumann condition on the boundary of the coincidence sets, $\{ |\zeta(x)|=a(x)\}$.
We also note that by \eqref{limjac}, the limiting Jacobian for minimizers is thus supported on the two coincidence sets, 
$$  S_-:= \left\{ x\in\omega: \  \zeta = -{a(x)\over 2}\right\},\qquad
  S_+:= \left\{ x\in\omega: \  \zeta = {a(x)\over 2}\right\}.  $$
The set $S_+$ is thus identified with concentrations of positively oriented vortices (at densities on the order of $\log\kappa$) and $S_-$ supports concentrations of antivortices (at densities on the order of $\log\kappa$) for configurations $(u,A)$ with bounded $I_{\eps,\kappa}$.

\bigskip

To illustrate the consequences of Theorems~\ref{thm:big1} and \ref{Glimsup}, and in particular Proposition \ref{dualprob}, we revisit two examples introduced in our study of the $\eps\to 0$ limit in our previous paper \cite{AlamaBronsardGS}.

\subsection{Vortices and antivortices coexisting}

First we consider a film with uniform thickness and parabolic geometry,
$f(x_1,x_2)= x_1^2 + x_2^2-\frac12$, and $g(x_1,x_2)= f(x_1,x_2)+1$,
for $x'=(x_1,x_2)\in B_1=\omega$ the unit disk.  We choose the external field ${\mathbf h}^{ex}= \left(H{\log\kappa\over\eps},0,0\right)$,
and so for this example,
$$  a(x')=1,\qquad{f(x')+g(x')\over 2}= x_1^2 + x_2^2, 
\qquad \text{and}\qquad  
 F= -H\partial_{x_1}\left[{f(x')+g(x')\over 2}\right]=-2Hx_1.  $$
Let $\zeta_H$ be the solution of \eqref{freebdry} with parameter $H$, which we vary according to the external field strength, and define
$$   \xi_H= {1\over 2H} \zeta_H,  $$
which solves
$$
\begin{cases}
\Delta \xi_H - x_1 =0 & \text{ on } 
\{-{1\over 4H}<\xi_H <{1\over 4H}\} \\
\Delta \xi_H-x_1 \ge 0 & \text{ on } \{\xi_H >-{1\over 4H}\} \\
\Delta \xi_H -x_1 \le 0  & \text{ on } \{\xi_H < {1\over 4H}\}
\end{cases}
$$
with $\xi_H\in H^1_0(B_1)$.  Thus we consider a fixed equation and allow the obstacles to vary as the external field strength increases.

First, observe that if $\xi_H(x_1,x_2)$ is a solution, then $\tilde \xi_H(x_1,x_2):=-\xi_H(-x_1,x_2)$ is also a solution. By the uniqueness of solutions, we deduce that $\xi_H = \tilde \xi_H$, and hence the solution is odd in $x_1$:
$$
\xi_H(x_1,x_2) = -\xi_H(-x_1,x_2)
$$
Furthermore, $\xi_H \in H^1_0(\omega^-)$ for $\omega^-:=\omega \cap \{x_1<0\}$ the left half-disk, and thus $\xi_H$ solves the two-obstacle problem in $\omega^-$ with Dirichlet boundary condition,
\begin{gather*}
\int_{\omega^-} {1\over a} \nabla\xi_H\cdot\nabla(v-\xi_H)
\ge -\int_{\omega^-} x_1\, (\xi_H-v), \quad\text{for all $v\in K_H^-$},\\
K_H^-:=\left\{ v\in H^1_0(\omega^-): \ |v(x)|\le {1\over 4H}\right\}
\end{gather*}
We claim that $\xi_H>0$ in $\omega^-$.  Indeed, we use $v=\xi_H^+=\max\{\xi_H,0\}\in K_H^-$ in the above variational inequality.  With $\xi_H^-=\max\{-\xi_H,0\}$ we obtain the contradiction
$$  0\le \int_{\omega^-} |\nabla \xi_H^-|^2 \le \int_{\omega^-} x_1\xi_H^- <0, $$
unless $\xi_H^-=0$ almost everywhere.  Thus, $\xi_H\ge 0$ in $\omega^-$.  Applying the strong maximum principle (see \cite{Friedman}, p. 22,) we have $\xi_H>0$ in $\omega^-$ as claimed.  

By symmetry, we conclude that $\xi_H<0$ in the right half-disk, $\omega^+$, and thus the coincidence sets (if nonempty) $S_+=S_+(H)\subset\omega^-$ and $S_-=S_-(H)\subset\omega^+$.  By restricting to the half-disks, $\xi_H$ solves a one-sided obstacle problem.
This problem may be solved explicitly in case the obstacle is not attained:  for 
$H < 3\sqrt{3}$, the solution for this problem (in polar coordinates) is
$$
\xi_H = \frac18r(1-r^2)\cos \theta,
$$
and the coincidence sets $S_\pm$ are empty.
When $H =3\sqrt{3}$, the solution is also given as above, with coincidence sets consisting of a single point $S_{\pm}(3\sqrt{3})=\big\{\big(\mp\frac{1}{\sqrt{3}},0\big)\big\}$.  This value of $H$ marks the lower critical field, the smallest value for which vortices appear in global minimizers of the energy.  By the comparison principle for variational inequalites (see \cite{Friedman}), the solutions $\xi_H$ are monotone decreasing in the half-disk $\omega^-$, and hence the coincidence sets $S_\pm(H)$ form two
increasing collections, $S_\pm(H)\subseteq S_\pm(\tilde H)$ for $H<\tilde H$, symmetrically placed across the $x_2$-axis.  We illustrate this case in Figure~\ref{fig1}.
Since the limiting vorticity is given by,
$$J=\frac12(-\Delta\zeta_H + H x_1)={H\over 2}(-\Delta\xi_H + x_1) $$
we conclude that $J<0$ on the set $S_-$ and $J>0$ on the set $S_+$, indicating the presence of positively oriented vortices on $S_+$ and antivortices on $S_-$.

\begin{figure}
\begin{center}
\begin{tabular}{ccc}
\includegraphics*[width=160pt]{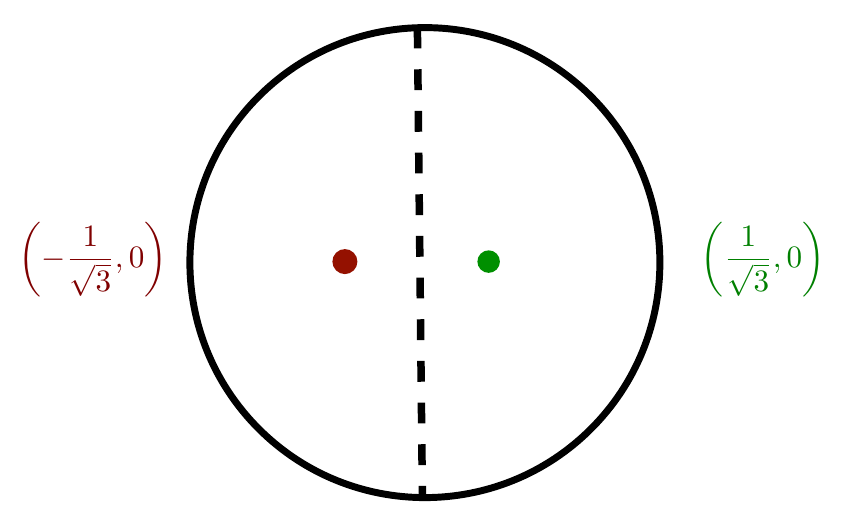}
	& \qquad & \includegraphics*[width=100pt]{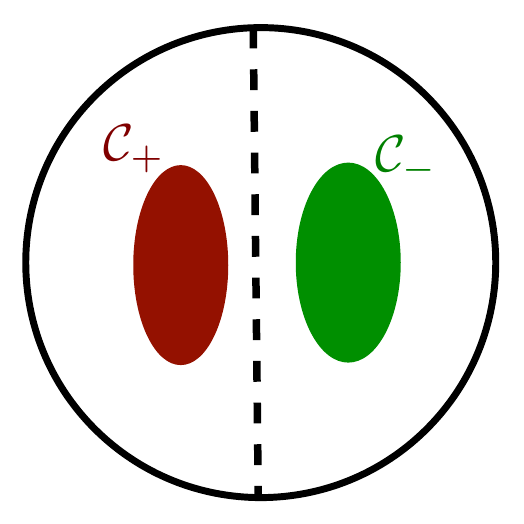} \\
\end{tabular}
\end{center}
\caption{In the first example, vortices first appear for $H=3\sqrt{3}$.  For $H>3\sqrt{3}$, positively charged vortices occupy a region in the left half-disk, and antivortices concentrate in a symmetrically placed domain in the right half-disk.}
\label{fig1}
\end{figure}

\subsection{Vortices accumulating on a ring}

We again choose $\omega=B_1$, the unit disk, and take a film of uniform width $a(x')=1$ defined by the lower surface,
$$
f(x_1,x_2) = -x_2\left(1-4x_1^2-\frac43 x_2^2\right) - \frac12
$$
with $g(x_1,x_2) = f(x_1,x_2) + 1$.  Taking an applied field of the form 
${\mathbf h}^{ex}= H{\log\kappa\over\eps}\vec e_2$ gives an effective field strength (in polar coordinates),
$$  F= (4r^2 -1)H. $$
Making a similar change of variables $\xi_H= {1\over H} \zeta_H$ as in the previous example, we obtain a two-obstacle problem for $\xi_H$ with a fixed equation but obstacles varying in $H$,
$$
\begin{cases}
\Delta \xi_H + 4r^2 -1=0 & \text{ on } 
\{-{1\over 2H}<\xi_H <{1\over 2H}\} \\
-\Delta \xi_H + 4r^2 -1 \ge 0 & \text{ on } \{\xi_H >-{1\over 2H}\} \\
-\Delta \xi_H + 4r^2 -1 \le 0 & \text{ on } \{\xi_H < {1\over 2H}\}
\end{cases}
$$
The beauty of this problem is that it can be solved explicitly, even after the constraints have been attained, and the coincidence sets are determined explicitly as functions of $H$.  Indeed, the general solution of the (unconstrained) equation is $
\xi = \frac14 r^2(1-r^2) + c_1 \log r + c_2
$, and the constants may be calculated to satisfy the obstacle constraints.

We may then assert:
\begin{romanlist}
\item When $H<8$, there is no coincidence set, the solution $\xi_H$ does not contact either obstacle.

\item When $H=8$, the coincidence set $S_+$ is a circle, with radius $r=\frac{1}{\sqrt{2}}$.  (The coincidence set $S_-=\emptyset$.)  The solution is still smooth and is represented in Figure \ref{fig_obstprob}.~(a).

\item For $8 < H <16$, the coincidence set $S_+$ expands to an annulus, with the outside radius increasing while the inner radius remaining fixed in $H$:
$$
S^+ = \left\{ \frac{1}{\sqrt{2}} \leq r \leq R(H)\right\},
$$
with $R(H)$ a strictly increasing function of $H$.  In this interval, the lower obstacle is not attained, so $S_-=\emptyset$.  This is illustrated in figure \ref{fig_obstprob}.~(b).

\item At $H=16$, the solution contacts the lower obstacle, and $S_-=\{(0,0)\}$.  The approximate value of  $R(16)=0.8229681606$. (See figure \ref{fig_obstprob}.~(c).)

\item[(v)] When $H>16$, the coincidence set enlarges on all fronts:
\begin{align*}
S^+ & = \{ \rho^+(H) \leq r \leq R(H)\}, \\
S^- & = \{ r\leq \rho^-(H) \},
\end{align*}
where $0<\rho^-(H)<\frac12<\rho^+(H)<\frac{1}{\sqrt{2}}$.  Each function $\rho^\pm(H),R(H)$ is monotone, with $\rho^\pm(H)\to\frac12$ and $R(H)\to 1$ as $H\to\infty$.  The development of both coincidence sets is illustrated in figure~\ref{fig_obstprob}.~(d) and (e).  As the Jacobian is supported on the coincidence sets, vortices accumulate both in the annular region $\rho^+(H)<r<R(H)$ (with positive orientation), and in the disk $r<\rho^-(H)$ as antivortices (with negative flux.)

\end{romanlist}

\begin{figure}[!htbp]
\begin{center}
\begin{tabular}{ccc}
\includegraphics*[width=150pt]{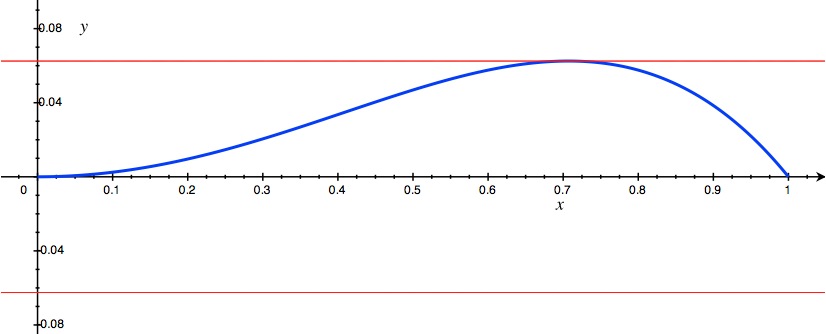}
	& \qquad & \includegraphics*[width=150pt]{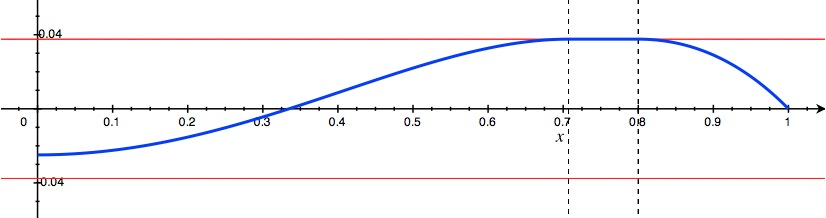} \\
(a) & & (b) \\
\includegraphics*[width=150pt]{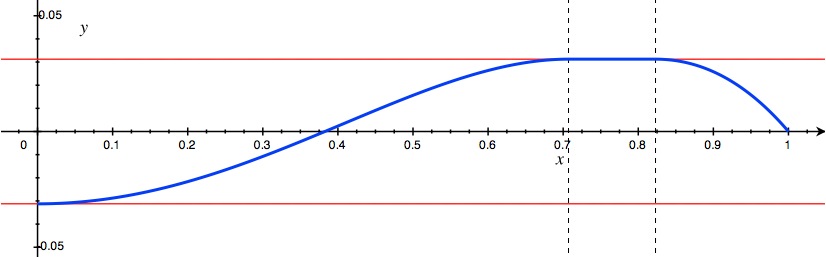}
	& & \includegraphics*[width=150pt]{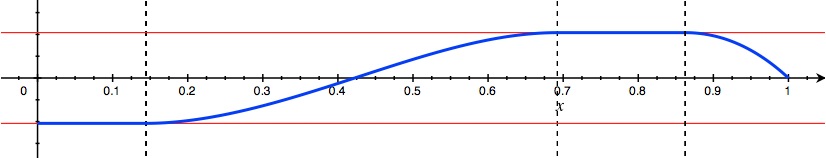} \\
(c) & & (d) \\
\includegraphics*[width=150pt]{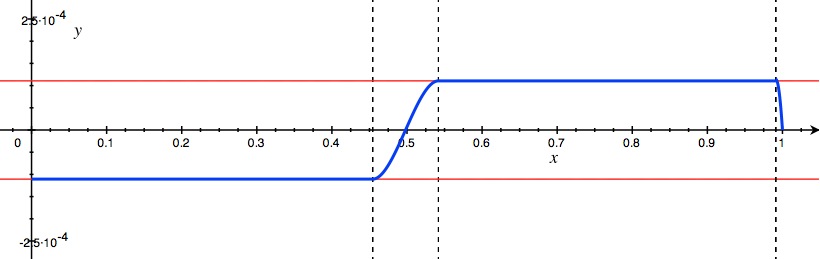}\\
(e) 
\end{tabular}
\end{center}
\caption{The minimizer $\xi_H(r)$ for various values of $H$.
(a) First appearance of vortices for $H=8$; 
	(b) For $8<H<16$, the coincidence set $\{u={1\over 2H}\}$ increases towards the outside only as $H$ increases;
	(c) When $H=16$, the bottom obstacle is reached;
	(d) When $H>16$, both coincidence sets keep increasing;
	(e) As $H \to \infty$, the coincidence sets increase until they touch at $r=\frac12$.}
\label{fig_obstprob}
\end{figure}

\subsection*{Acknowledgements}
The authors were supported by an NSERC (Canada) Discovery Grant.
Part of this work was done while B. Galv\~ao-Sousa was a postdoctoral fellow at McMaster University.


\bibliographystyle{amsalpha}
\addcontentsline{toc}{section}{References}
\bibliography{refs}


\end{document}